\documentclass[12pt]{amsart}
\usepackage[english]{babel}
\usepackage[utf8]{inputenc}
\usepackage{lipsum}
\usepackage{stix}
\usepackage{float}
\usepackage{fullpage}
\usepackage{bbm}
\usepackage{dsfont}
\usepackage{relsize}
\usepackage{amsthm}
\usepackage[breaklinks]{hyperref}
\usepackage{blindtext}
\usepackage{amsmath,amssymb}
\usepackage{mathtools}
\usepackage{autonum}
\usepackage{enumitem}
\usepackage[usenames,dvipsnames,svgnames]{xcolor}
\usepackage{mathrsfs}
\usepackage{csquotes}
\usepackage[]{hyperref}
\hypersetup{colorlinks=true,linkcolor=WildStrawberry,citecolor=cyan}
\newtheorem{thm}{Theorem}[section]

\newtheorem{lem}[thm]{Lemma}

\theoremstyle{definition}
\newtheorem{defn}[thm]{Definition}
\theoremstyle{remark}
\newtheorem{rem}[thm]{Remark}

\numberwithin{equation}{section}

\newcommand{\To}{\longrightarrow}

\makeatletter
\renewcommand\paragraph{\@startsection{paragraph}{4}{\z@}%
	{-2.5ex\@plus -1ex \@minus -.25ex}%
	{1.25ex \@plus .25ex}%
	{\normalfont\normalsize\centering\bfseries}}
\newcommand{\nocontentsline}[3]{}
\let\origcontentsline\addcontentsline
\newcommand\stoptoc{\let\addcontentsline\nocontentsline}
\newcommand\resumetoc{\let\addcontentsline\origcontentsline}
\makeatother
\begin{document}
	\title
	[Semilinear parabolic equation involving sub-Laplacian]
	{Inhomogeneous Parabolic Equations with Hardy Potential and Memory on the Heisenberg Group}
	
	\author[V.\, Kumar,\,\, P.\,Oza,\,\, D.\,Suragan]
	{Vishvesh Kumar,\,\, Priyank Oza,\,\,Durvudkhan Suragan }
	\address{Vishvesh\,Kumar \hfill\break
		Department of Mathematical Sciences\\
		Indian Institute of Technology (BHU)  \newline
		Varanasi, Uttar Pradesh, 221005, India.}
	\email{ vishveshmishra@gmail.com, vishvesh.mat@iitbhu.ac.in}
    
	\address{Priyank\,Oza \hfill\break
		Department of General Sciences \\
		Birla Institute of Technology and Science Pilani, Dubai Campus \newline
		International Academic City,
        Dubai, 345055, UAE.}
	\email{priyank.oza3@gmail.com, priyankkumar.oza@dubai.bits-pilani.ac.in}
    
	\address{Durvudkhan\,Suragan \hfill\break
		Department of Mathametics\\
		Nazarbayev University \newline
		53 Kabanbay Batyr Ave, Astana, Kazakhstan-010000.}
	\email{durvudkhan.suragan@nu.edu.kz}
    
	\thanks{Submitted \today.  Published-----.}
	\subjclass[2020]{35A01, 35H20, 35R03, 35K58, 35B44}
	\keywords{Partial differential equations on the Heisenberg group, inhomogeneous parabolic equation, Hardy potential, local existence, finite time blow-up, lifespan estimates}
	\maketitle
	\begin{abstract}
		We study a class of inhomogeneous parabolic equations on the Heisenberg group $\mathbbm{H}^N$ with Hardy-type singular potentials, nonlocal memory terms,  and a space-time forcing term:
		\begin{align}
			\partial_tu-\Delta_{H}u=\lambda \frac{\psi u}{\|\cdot\|^{2}_{H}}+\frac{1}{\Gamma(\gamma)}\int_0^t(t-\tau)^{\gamma-1}|u(\tau)|^{p}d\tau+t^\alpha f \text{ in } \,\mathbbm{H}^N\times (0,T).
		\end{align}
		Here, $\gamma\in [0,1),$ $\alpha\in (-1,\infty),$ $p>1,$ $\lambda>0,$ and $\psi(\cdot)=|\nabla_{H}\|\cdot\|_{H}|^2,$ where $\nabla_H$ is the horizontal gradient  associated to $\Delta_H.$ Also, $\|\cdot\|_{H}$ and $\Delta_{H}$ denote the Kor\'anyi norm and sub-Laplacian associated with the sub-Riemannian geometry of $\mathbbm{H}^N,$ respectively. The combination of a singular Hardy potential and a memory kernel introduces significant analytical challenges. Using a Harnack-type inequality adapted to the Heisenberg group setting, we obtain quantitative positivity estimates that enable a detailed blow-up analysis. We identify parameter regimes depending on  $p,\gamma,\alpha$ leading to finite-time blow-up or instantaneous blow-up, and establish local well-posedness in the absence of the Hardy potential. These results reveal an interplay between the spatial singularity, temporal nonlocality and a time-dependent forcing term. Finally, under a suitable lower bound on the forcing term $f,$ we derive an explicit lifespan estimate for local-in-time solutions. 
	\end{abstract}	
	{\hypersetup{linkcolor=black}
		\tableofcontents}
	\section{Introduction}
	Parabolic equations with nonlocal memory and singular potentials have gained significant attention for modeling complex diffusion phenomena beyond the classical framework. Such formulations arise naturally in materials with thermal relaxation, viscoelasticity, or anomalous heat conduction. In particular, the inclusion of a memory term of Volterra type captures temporal nonlocality, describing processes in which the present rate of change depends on the cumulative history of the state variable. Concurrently, the Hardy potential introduces a spatially singular weight that reflects the influence of geometric constraints or concentration effects near the origin, thereby modifying the effective diffusion rate. These terms, in fact, affect the qualitative behaviour of solutions, such as finite-time blow-up, instantaneous blow-up and existence of local-in-time solutions. 
	
	The phenomenon of blow-up, signifying the loss of global existence in finite time, plays a central role in the qualitative theory of nonlinear evolution equations. As highlighted in the classical review by Levine \cite{Levine}, this blow-up describes the finite-time breakdown of solutions in various physical and engineering models, including fluid flow \cite{F Strau}, quantum mechanics \cite{Q Gold}, and chemical reactions \cite{C Frank}. 
	
	Motivated by these aspects, our work contributes to understanding how singular potentials and memory terms can affect the blow-up behaviour of solutions in the non-Euclidean setting of the Heisenberg group. In particular, we study a class of semilinear parabolic equations on the Heisenberg group of the form
	\begin{align}\label{eq 0.1}
		\partial_tu-\Delta_{H}u=\lambda \frac{\psi u}{\|\cdot\|^{2}_{H}}+\frac{1}{\Gamma(\gamma)}\int_0^t(t-\tau)^{\gamma-1}|u(\tau)|^{p}d\tau+t^\alpha f \text{ in } \,\mathbbm{H}^N\times (0,T),
	\end{align}
	where $\gamma\in [0,1),$ $\alpha\in (-1,\infty),$ $\lambda>0,$ and $\psi(\cdot)=|\nabla_{H}\|\cdot\|_{H}|^2.$  
	
	\noindent In comparison to the semilinear parabolic model studied by the second and third-named authors \cite{Oza Sur 2}
	\begin{align}\label{SuO}
		\partial_tu-\Delta_{H}u=|u|^p+t^\alpha f \text{ in } \,\mathbbm{H}^N\times (0,T),
	\end{align}
	the coexistence of a memory-dependent source term and a singular Hardy potential combines temporal nonlocality with spatial singularity in \eqref{eq 0.1}. In the limiting case, when $\lambda\To0$ and $\gamma\To 0,$ \eqref{eq 0.1} reduces to \eqref{SuO}. Therefore, the current model can be viewed as a nonlocal temporal extension of \eqref{SuO} together with a spatial singularity. This introduces significant analytical difficulties absent in the classical diffusion models.
	
	This work aims to characterize the finite-time blow-up regime of solutions, identifying the critical values of $p$ up to which global-in-time existence is not possible. To illustrate the influence of the model parameters, we summarize the blow-up regimes in Table \ref{table 1}.
	\begin{table}[H]
		\centering
		\begin{tabular}{|c|c|c|} 
			\hline
			Case &Blow-up regime &Corresponding result\\ [0.5ex] 
			\hline\hline
			
			\hspace{-.25cm} \quad$\bullet$ $\gamma\in [0,1),$\,$\alpha\in (-1,0)$  &\,\,\qquad$1<p<\frac{Q+2\gamma-\mu(\lambda)-2\alpha}{Q-2-\mu(\lambda)-2\alpha}$ &Theorem \ref{Thm Blowup O}\\
			\hspace{-3.5cm} \qquad\qquad\qquad\quad\,\,\quad$\bullet$ $\gamma\in [0,1) ,$\,$\alpha\in (0,\infty)$ & $p>1$ &Theorem \ref{Thm Blowup O2}\\[1mm]
			\hspace{-2.67cm}\qquad\qquad\quad\,\,\,\quad$\bullet$ $\gamma\in (0,1),$\,$\alpha\in [0,\infty)$ & $p>1$ &Theorem \ref{Thm Blowup O3}\\
			\hline
		\end{tabular}
		\caption{Blow-up regime.}
		\label{table 1}
	\end{table}
	To handle the singular potential, we rely on the Heisenberg group analogue of the classical Hardy inequality (Theorem \ref{Hardy}), which provides sharp control of the potential term. In fact, in view of it, for a fixed $\lambda\in \big(0,\big(\frac{Q-2}{2}\big)^2\big),$ we define 
	\begin{align}
		\mu=\mu(\lambda)\coloneqq\frac{Q-2}{2}-\sqrt{\left(\frac{Q-2}{2}\right)^2-\lambda}.
	\end{align}
	This, in particular, is a non-trivial root of the equation
	\begin{align}
		\mu^2-(Q-2)\mu+\lambda=0.
	\end{align}	
	Using the transformation $v(\xi,t)=\|\xi\|_H^{\mu(\lambda)}u(\xi,t)$, the main equation \eqref{eq 0.1} can be reformulated in a simpler form, facilitating the analysis of blow-up. Details of this transformation and the resulting equivalent system are given in Section \ref{Preliminaries}, see in particular, Subsection \ref{WT sub}.

	The Fujita phenomenon, originating from Fujita \cite{Fujita}, identifies the critical exponent separating global existence from blow-up for semilinear heat equations. Subsequent works by Hayakawa \cite{Haya}, Sugitani \cite{Sugitani}, and Kobayashi et al. \cite{Koba} refined the analysis by determining sharp thresholds and extending the results to broader classes of nonlinearities and initial data. Later, Bandle and Levine \cite{Bandle} considered nonhomogeneous sources, while Jleli et al. \cite{Jleli} addressed time-dependent coefficients, revealing the sensitivity of the critical behavior to temporal parameters.
	
	In the broader context of stratified Lie groups, Pascucci \cite{Pasu} extended classical Fujita-type results, and more recently, the third-named author and Talwar \cite{Talwar} studied semilinear subelliptic heat equations with a forcing term $f$ on arbitrary stratified Lie groups $\mathbb{G} = (\mathbb{R}^N, \star)$ governed by the sub-Laplacian
	\begin{align}
		\Delta_{\mathbbm{G}}\coloneqq-\sum_{i=1}^mX_i^2,
	\end{align}
	where $\{X_i\}_{i=1}^m$ generates $\mathbb{G}$. Furthermore, in \cite{Talwar2}, they studied the influence of a Hardy potential on the their model and \cite{kir} investigates a homogeneous parabolic equation on the Heisenberg with a memory term. We also mention the recent works of Loiudice \cite{Loiu, Loiu 2} on sub-elliptic problems with Hardy potentials on stratified Lie groups. Although their equations are stationary, these works illustrate the strong influence of Hardy-type singularities in the existence, non-existence, and local behaviour of solutions. Recently, several researchers have explored parabolic equations
	with sub-elliptic operators, see, for instance, \cite{Ahmad2, Ahmad, Berik, Jleli2, Poho, Ruzha2, Ruzha, ZhangQ1}. Related insights on the instantaneous blow-up results for heat equation with singular potentials appear in \cite{Gold, Gold 2}. In particular, on the Heisenberg group, equation \eqref{SuO} with $\alpha>-1$ has been very recently investigated in \cite{Oza Sur 2}, where the Fujita exponent is determined to be:
	\begin{align}\label{palpha}
		p^\alpha_F=\begin{cases}
			\frac{Q-2\alpha}{Q-2-2\alpha} &\text{ for }-1<\alpha<0,\\
			\infty &\text{ for } \alpha>0.	
		\end{cases}
	\end{align}
	A remarkable feature 
	of this exponent is its discontinuity at $\alpha=0.$ Indeed:
	\begin{align}
		\lim_{\alpha\To 0^{-}}p^\alpha_F=\frac{Q}{Q-2}=p_F^0, \text{ whereas } \lim_{\alpha\To 0^{+}}p^\alpha_F=\infty.
	\end{align}
	Notably, unlike in the Euclidean setting, the Fujita exponent \eqref{palpha} remains bounded across all dimensions when $\alpha\in (-1,0].$ Furthermore, the authors derived upper bounds on the lifespan of local-in-time
	solutions in the subcritical regime.  
	
	
	Inspired by these developments, we consider the equation \eqref{eq 0.1} to enrich the understanding of such models in the non-commutative and non-Euclidean contexts. Our main results-- Theorems \ref{Thm Blowup O}--\ref{Thm Blowup O3} identify the blow-up regime corresponding to Fujita-type phenomena, specifying the parameter ranges where solutions cannot exist globally.
	
	The novelty of this work lies in simultaneous presence of a singular Hardy potential, a nonlocal memory, and a time-dependent forcing. To the best of our knowledge, such a combination has not been previously studied. In particular, our results are new even in the Euclidean setting, including the simplified case $f=0$ in \eqref{eq 0.1}.

	
	We are now ready to formally state our main results concerning the blow-up behavior of solutions to problem \eqref{eq 0.1} irrespective of the non-negative initial data. 
	


\begin{thm} \label{Thm Blowup O} Let $\mathbb{H}^N$ be the Heisenberg group with homogeneous dimension $Q:=2N+2,$ and 
	let $\gamma\in [0,1)$ and $\alpha\in (-1,0).$ We assume that  
	\begin{align}
		 \int_{\mathbbm{H}^N}\|\xi\|_H^{-\mu(\lambda)}u_0(\xi)d\xi<\infty,\,0< \int_{\mathbbm{H}^N}\|\xi\|_H^{-\mu(\lambda)}f(\xi)d\xi<\infty.
	\end{align} 
	Then, for any $1<p<\frac{Q+2\gamma-\mu(\lambda)-2\alpha}{Q-2-\mu(\lambda)-2\alpha}$,  problem \eqref{eq 0.1} does not admit a global-in-time weak solution.
\end{thm}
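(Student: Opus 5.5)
The plan is a test-function (nonlinear capacity) argument of Mitidieri--Pohozaev type, carried out after the ground-state transformation $v=\|\xi\|_H^{\mu(\lambda)}u$ of Subsection \ref{WT sub}. The transformation turns the Hardy term into a weighted divergence-form operator, $\|\xi\|_H^{-2\mu}\,\mathrm{div}_H\big(\|\xi\|_H^{-2\mu}\nabla_H(\cdot)\big)$ up to normalisation. The point of working this way is that the function $\|\xi\|_H^{-\mu}$ solves $-\Delta_H w-\lambda\psi w/\|\xi\|_H^2=0$ away from the origin. Consequently the natural weak formulation tests \eqref{eq 0.1} against functions of the form $\varphi(\xi,t)=\|\xi\|_H^{-\mu}\,\Phi_R(\xi)\,\eta_T(t)$, where
\begin{itemize}
\item $\Phi_R(\xi)=\Phi(\|\xi\|_H/R)$, with $\Phi$ a smooth cutoff equal to $1$ on $[0,1]$ and $0$ on $[2,\infty)$;
\item $\eta_T(t)=(1-t/T)_+^{k}$ with $k$ large.
\end{itemize}
With this choice, the operator $-\Delta_H-\lambda\psi/\|\cdot\|_H^2$ applied to $\varphi$ produces only terms supported in the annulus $R\le\|\xi\|_H\le 2R$. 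Those terms are bounded by $CR^{-2}\|\xi\|_H^{-\mu}$, using $\psi\le 1$ and the homogeneity of the Korányi norm.

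\textbf{Key steps, in order.}
\begin{enumerate}
\item Suppose a global weak solution exists; we will contradict the weak identity. Write the memory term via the Riemann--Liouville integral $I^{\gamma}_{0+}|u|^p$ and move it onto the test function using the fractional integration-by-parts identity $\int_0^T (I^\gamma_{0+} g)\,\eta\,dt=\int_0^T g\,(I^\gamma_{T-}\eta)\,dt$.
\item Choose the time factor as $\eta_T=I^{1-\gamma}_{T-}$-related, i.e. $\eta_T=(1-t/T)_+^k$ and compute $I^{\gamma}_{T-}\eta_T=c\,T^{\gamma}(1-t/T)_+^{k+\gamma}$ explicitly (for $\gamma=0$ the term is just $|u|^p$).
\item Since $u_0\ge0$ and $u\ge0$ can be assumed (or the data terms are nonnegative), the identity gives
\begin{align}
cT^{\gamma}\!\iint |u|^p\tilde\varphi+\Big(\int_0^T t^\alpha\eta_T\,dt\Big)\int f\|\xi\|_H^{-\mu}\Phi_R\,d\xi\le \iint |u|\,\big|\partial_t\varphi+L^*\varphi\big|.
\end{align}
Here $\tilde\varphi=\|\xi\|_H^{-\mu}\Phi_R(1-t/T)^{k+\gamma}$ and $L^*$ is the formal adjoint operator.
\item Apply Young's inequality $ab\le \varepsilon a^p\theta+C_\varepsilon b^{p'}\theta^{-p'/p}$ to the right-hand side, absorbing the $|u|^p$ part into the left. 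This leaves a right-hand side of order
\begin{align}
T^{-\gamma p'/p}\big(T^{-p'}+R^{-2p'}\big)\,T\,R^{Q-\mu}.
\end{align}
The factor $R^{Q-\mu}$ comes from $\int_{\|\xi\|_H\sim R}\|\xi\|_H^{-\mu}\,d\xi$.
\item The forcing term on the left is of order $T^{\alpha+1}\int f\|\xi\|_H^{-\mu}\Phi_R\,d\xi$, and the integral tends to the positive constant $\int f\|\xi\|_H^{-\mu}$ as $R\to\infty$.
\item Take $R=T^{1/2}$ and let $T\to\infty$. The comparison becomes
\begin{align}
T^{\alpha+1}\lesssim T^{\,1+\frac{Q-\mu}{2}-p'-\frac{\gamma p'}{p}}.
\end{align}
This yields a contradiction precisely when $\alpha>\frac{Q-\mu}{2}-p'(1+\gamma/p)$. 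Rearranging gives $p<\frac{Q+2\gamma-\mu-2\alpha}{Q-2-\mu-2\alpha}$, which is exactly the stated threshold.
\end{enumerate}

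The hypothesis $\int\|\xi\|_H^{-\mu}u_0<\infty$ is used only to make the initial-data term finite, and that term is discarded since it is nonnegative. The exponent computation in Step 6 needs to be checked carefully, including that $\alpha<0$ keeps the denominator positive; this is where the restriction $\alpha\in(-1,0)$ enters.

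\textbf{Main obstacle.} The hardest step will be justifying the weighted test function near the origin. The factor $\|\xi\|_H^{-\mu}$ is singular there, so one has to check two things:
\begin{itemize}
\item the weak formulation, which is valid for the transformed equation for $v$, allows $\varphi$ of this form, i.e. the boundary contributions at $\xi=0$ vanish because $\mu<(Q-2)/2$;
\item $L^*\varphi$ really vanishes on $\{\|\xi\|_H<R\}$, using that $\|\xi\|_H^{-\mu}$ is $\Delta_H$-"harmonic modulo the Hardy term" with $\psi$ exactly matching.
\end{itemize}
I would do both by working with $v$ and the weighted measure $\|\xi\|_H^{-2\mu}d\xi$ from Subsection \ref{WT sub}. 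In those variables the test function becomes the plain cutoff $\Phi_R\eta_T$, and the weight integrability near $0$ follows from $2\mu<Q$.
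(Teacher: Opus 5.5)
Your proposal is correct and is essentially the paper's argument. Under $v=\|\xi\|_H^{\mu(\lambda)}u$, your test function $\|\xi\|_H^{-\mu(\lambda)}\Phi_R\eta_T$ with $R=\sqrt T$ becomes exactly the paper's cutoff $\varphi(\|\xi\|_H/\sqrt T)(1-t/T)^l$ in the weighted formulation of Definition \ref{Weak}. The paper then computes $I^\gamma_{T^-}\phi_2$ via the Beta function, absorbs the $|v|^p$ terms by a weighted $\varepsilon$-Young inequality, and compares $T^{\alpha+1}$ with $T^{\frac{Q}{2}+1-\frac{\gamma+p}{p-1}-\frac{\mu(\lambda)}{2}}$, just as you do.

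The only small difference is how the initial-data term is handled. The paper does not drop it by a sign assumption; it divides by $T^{\alpha+1}$ and uses $\int_{\mathbbm{H}^N}\|\xi\|_H^{-\mu(\lambda)}u_0\,d\xi<\infty$ with $\alpha>-1$ to send it to zero. That step, together with the finiteness of $\int_0^1 t^\alpha(1-t)^l\,dt$, is where the lower bound $\alpha>-1$ actually enters.
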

Moreover, in the case when either $\alpha>0$ or $\gamma>0,$ the blow-up threshold increases substantially, rising from $\frac{Q+2\gamma-\mu(\lambda)-2\alpha}{Q-2-\mu(\lambda)-2\alpha}$ to $\infty.$ More precisely, we have the following results:

\begin{thm}\label{Thm Blowup O2} Let $\mathbb{H}^N$ be the Heisenberg group with homogeneous dimension $Q:=2N+2,$ and
	let $\gamma\in [0,1)$ and  $\alpha\in (0,\infty).$ We assume that 	\begin{align}
		\int_{\mathbbm{H}^N}\|\xi\|_H^{-\mu(\lambda)}u_0(\xi)d\xi<\infty,\,0< \int_{\mathbbm{H}^N}\|\xi\|_H^{-\mu(\lambda)}f(\xi)d\xi<\infty.
	\end{align}   
	Then, for any $p>1,$ problem \eqref{eq 0.1} does not admit a global-in-time weak solution.
\end{thm}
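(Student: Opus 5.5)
Assume, for contradiction, that $u\ge 0$ is a global-in-time weak solution. Following Subsection \ref{WT sub}, I will work with $v=\|\xi\|_H^{\mu(\lambda)}u$, which removes the Hardy potential at the cost of a weighted operator $L_\mu$ that is symmetric with respect to the measure $\|\xi\|_H^{-2\mu}d\xi$. The proof is a test-function (rescaled capacity) argument in the spirit of \cite{Oza Sur 2}. Its essential input is that a positive forcing $t^\alpha f$ with $\alpha>0$ contributes a mass that grows like $T^{\alpha+1}$. This growth beats every negative term for any $p>1$.

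\textbf{Test function.} I take
\[
\varphi(\xi,t)=\|\xi\|_H^{-\mu(\lambda)}\,\Phi\!\left(\frac{|\xi|_R}{R}\right)\eta\!\left(\frac tT\right),
\]
where $\|\xi\|_H^{-\mu}$ is the $\lambda$-harmonic weight, i.e.\ $\Delta_H\|\xi\|_H^{-\mu}+\lambda\psi\|\xi\|_H^{-\mu-2}=0$ (equivalently $\mu^2-(Q-2)\mu+\lambda=0$). The functions $\Phi,\eta$ are smooth cut-offs raised to a large power $\ell$, and $R=T^{1/2}$ is the parabolic scaling. To handle the memory term I use the Riemann--Liouville structure: writing the weak formulation with $\eta$ replaced by $\eta$-type functions whose right-sided fractional integral behaves like $T^{\gamma}$, the nonlinear term becomes $\int\!\!\int |u|^p\, I^{\gamma}_{T-}\varphi$. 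Testing \eqref{eq 0.1} and discarding the nonnegative initial-data term gives
\[
\int_0^T\!\!\int |u|^p\, I^{\gamma}_{T-}\varphi \;+\; \int_0^T t^\alpha\eta\,dt\int \|\xi\|^{-\mu}_H\Phi f
\;\le\; \int\!\!\int u\,\big|\partial_t\varphi+\Delta_H\varphi+\lambda\psi\|\xi\|_H^{-2}\varphi\big|.
\]
Because the weight is $\lambda$-harmonic, the spatial operator only falls on $\Phi$ and produces factors $R^{-2}$ supported in the annulus $R\le|\xi|_R\le 2R$.

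\textbf{Estimates.} I apply Young's inequality with weights $I^\gamma_{T-}\varphi$ to the right-hand side, absorbing $\varepsilon\int\!\!\int |u|^p I^\gamma\varphi$ into the left. The remainder is bounded by
\[
C\,T^{-\gamma p'/p}\big(T^{-p'}+R^{-2p'}\big)\,T\,R^{Q-\mu},
\]
where the factor $R^{Q-\mu}$ comes from $\int_{|\xi|\le 2R}\|\xi\|^{-\mu p'}\|\xi\|^{\mu(p'-1)}$, which is of order $R^{Q-\mu}$. The integrability assumptions on $u_0,f$ ensure the left-hand forcing term is at least $c\,T^{\alpha+1}\int\|\xi\|_H^{-\mu}f>0$ once $R$ is large. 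With $R^2=T$ this yields
\[
c\,T^{\alpha+1}\le C\,T^{\,1-p'-\gamma p'/p+(Q-\mu)/2}.
\]

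\textbf{Main obstacle.} For $\alpha<0$ the two exponents must be compared to get the threshold of Theorem \ref{Thm Blowup O}. For $\alpha>0$ this scaling alone does not give all $p>1$; the point is to decouple $R$ and $T$. I first fix $T$ and let $R\to\infty$. Since the memory kernel enlarges the effective time window, in the regime $\alpha>0$ I expect to use a Harnack-type lower bound showing that $u(\cdot,t)\gtrsim t^{\alpha+1}$ on a fixed ball. I would plug this quantitative positivity into the nonlinear term, giving $\int\!\!\int |u|^p I^\gamma\varphi\gtrsim T^{(\alpha+1)p+\gamma+1}$. This iterates into growth that eventually contradicts the upper bound $C\,T^{1-p'-\gamma p'/p}R^{Q-\mu}$ for fixed $R$. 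Making this positivity estimate rigorous near the singularity $\xi=0$ of the Hardy weight is the step I expect to be hardest; I would first try it on an annulus away from the origin. Letting $T\to\infty$ then gives the contradiction for every $p>1$.
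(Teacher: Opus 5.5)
Your setup is sound. Your test function $\|\xi\|_H^{-\mu(\lambda)}\Phi\,\eta$ for $u$ is exactly the paper's $\phi_1\phi_2$ tested against $v$, the Young-inequality remainder you derive is correct, and you are right that the parabolic coupling $R=T^{1/2}$ only reproduces the threshold of Theorem \ref{Thm Blowup O}. It therefore misses the range $0<\alpha<\frac{Q-2-\mu(\lambda)}{2}$.

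The gap is in your \emph{Main obstacle} step. There you take the limits in the wrong order and then fill the hole with an unproved Harnack lower bound $u(\cdot,t)\gtrsim t^{\alpha+1}$ and an unspecified iteration. Fixing $T$ and letting $R\to\infty$ cannot work: your own remainder $C\,T^{1-\frac{\gamma}{p-1}}\big(T^{-p'}+R^{-2p'}\big)R^{Q-\mu}$ grows like $R^{Q-\mu}$. The positivity-plus-iteration scheme is, as you admit, not established; comparison and Harnack arguments for weak solutions near the Hardy singularity are genuinely delicate. Moreover, your final comparison uses only the time-derivative bound $C\,T^{1-p'-\frac{\gamma}{p-1}}R^{Q-\mu}$ and drops the spatial term. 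For fixed $R$ the spatial term is the dominant one. As written, the argument does not close for any $\alpha>0$ outside the scaling range.

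No positivity estimate is needed, and your inequality already contains the conclusion. Fix $R$, as the paper does by taking $\phi_1(\xi)=\varphi(\epsilon\|\xi\|_H)$ with $\epsilon$ small and fixed so that $\int\|\xi\|_H^{-\mu(\lambda)}f\phi_1\,d\xi>0$, and send $T\to\infty$. For $T\ge 1$ your estimate becomes
\[
c\,T^{\alpha+1}\int_{\mathbbm{H}^N}\|\xi\|_H^{-\mu(\lambda)}f\,\phi_1\,d\xi\le C_R\Big(T^{1-\frac{p+\gamma}{p-1}}+T^{1-\frac{\gamma}{p-1}}\Big).
\]
The first term comes from $\partial_t$. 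The second comes from the spatial operator, whose factor $R^{-2p'}R^{Q-\mu}$ is now just a constant. Dividing by $T^{\alpha+1}$ gives
\[
\int_{\mathbbm{H}^N}\|\xi\|_H^{-\mu(\lambda)}f\,\phi_1\,d\xi\le C\Big(T^{-\frac{p+\gamma}{p-1}-\alpha}+T^{-\frac{\gamma}{p-1}-\alpha}\Big)\longrightarrow 0,
\]
since $\alpha>0$ and $\gamma\ge 0$. This is a contradiction for every $p>1$, and it is precisely the paper's proof. The mechanism is that the forcing mass grows like $T^{\alpha+1}$, while over a fixed spatial window the linear terms grow at most like $T^{1-\gamma/(p-1)}\le T$. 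The same comparison explains why Theorem \ref{Thm Blowup O3} can allow $\alpha=0$ only when $\gamma>0$.
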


\begin{thm}\label{Thm Blowup O3} Let $\mathbb{H}^N$ be the Heisenberg group with homogeneous dimension $Q:=2N+2,$ and
	let $\gamma\in (0,1)$ and  $\alpha\in [0,\infty).$ We assume that 	\begin{align}
		\int_{\mathbbm{H}^N}\|\xi\|_H^{-\mu(\lambda)}u_0(\xi)d\xi<\infty,\,0< \int_{\mathbbm{H}^N}\|\xi\|_H^{-\mu(\lambda)}f(\xi)d\xi<\infty.
	\end{align}  
	Then, for any $p>1,$ problem \eqref{eq 0.1} does not admit a global-in-time weak solution.
\end{thm}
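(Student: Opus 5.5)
The plan is to use the test function method on the weak formulation. The test function will be built from $\|\xi\|_H^{-\mu(\lambda)}$, which is the natural weight behind the transformation $v=\|\xi\|_H^{\mu(\lambda)}u$ of Subsection \ref{WT sub}. The key structural fact is that $w(\xi)=\|\xi\|_H^{-\mu}$ solves $\Delta_H w+\lambda\psi w/\|\xi\|_H^2=0$ on $\mathbbm{H}^N\setminus\{0\}$, because $\mu$ is a root of $\mu^2-(Q-2)\mu+\lambda=0$. Since $\mu<\frac{Q-2}{2}$, the weight is locally integrable, so such test functions should be admissible in the weak formulation.

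Fix a smooth radial cutoff $\phi_R(\xi)=\Phi(\|\xi\|_H/R)$ with $\Phi\equiv1$ on $[0,1]$ and $\Phi\equiv 0$ on $[2,\infty)$. Take $\eta_T(t)=(1-t/T)_+^{\ell}$ with $\ell$ large, and set $\varphi(\xi,t)=\|\xi\|_H^{-\mu}\phi_R^{k}(\xi)\eta_T(t)$.

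For the memory term, Fubini turns the Riemann--Liouville integral onto the test function:
\[
\int_0^T (I^\gamma_{0+}|u|^p)\,\eta_T\,dt=\int_0^T |u|^p\, I^\gamma_{T-}\eta_T\,dt,\qquad I^\gamma_{T-}\eta_T(t)=c_{\ell,\gamma}\,T^{\gamma}(1-t/T)^{\ell+\gamma}.
\]
The forcing term contributes $c\,T^{\alpha+1}\int f\|\xi\|_H^{-\mu}\phi_R^k\,d\xi$. Since $u_0\ge0$, the initial-data term has a good sign and can be dropped. Putting this together, I expect an inequality of the form
\[
cT^{\gamma}\!\int_0^T\!\!\int |u|^p \|\xi\|_H^{-\mu}\phi_R^k\tilde\eta_T+cT^{\alpha+1}\!\int f\|\xi\|_H^{-\mu}\phi_R^k
\le \int_0^T\!\!\int |u|\,\|\xi\|_H^{-\mu}\phi_R^k|\eta_T'|+\int_0^T\!\!\int |u|\,\eta_T\,\big|\Delta_H(\|\xi\|_H^{-\mu}\phi_R^k)+\lambda\psi\|\xi\|_H^{-\mu-2}\phi_R^k\big|,
\]
where $\tilde\eta_T=(1-t/T)^{\ell+\gamma}$.

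By the harmonicity of $w$, the last bracket reduces to cross terms $\nabla_H w\cdot\nabla_H\phi_R^k$ and $w\,\Delta_H\phi_R^k$. These are supported in the annulus $R\le\|\xi\|_H\le2R$, where they are bounded by $CR^{-2}\|\xi\|_H^{-\mu}\phi_R^{k-2}$ (using $\psi\le1$). On each of the two right-hand terms I apply the $\varepsilon$-Young inequality against the weight $T^\gamma\|\xi\|_H^{-\mu}\phi_R^k\tilde\eta_T$, and absorb the $|u|^p$ parts into the left side. What remains is
\[
C\,T^{-\gamma/(p-1)}\Big(T^{1-p'}R^{Q-\mu}+T\,R^{Q-\mu-2p'}\Big),
\]
with $k,\ell$ chosen large so that all powers of the cutoffs remain integrable.

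Unlike in Theorem \ref{Thm Blowup O}, I will not couple $R$ with $T$. Since $0<\int\|\xi\|_H^{-\mu}f<\infty$, monotone convergence lets me fix $R=R_0$ so large that $\int f\|\xi\|_H^{-\mu}\phi_{R_0}^k\ge \frac12\int f\|\xi\|_H^{-\mu}>0$. With $R$ fixed, the inequality becomes
\[
c_0T^{\alpha+1}\le C_{R_0}\big(T^{1-p'-\gamma/(p-1)}+T^{1-\gamma/(p-1)}\big).
\]
Because $\alpha\ge0$ and $\gamma>0$, we have $\alpha+1>1-\frac{\gamma}{p-1}$ for every $p>1$, so letting $T\to\infty$ gives a contradiction. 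Hence no global weak solution exists. This is exactly why $\gamma>0$ makes the case $\alpha=0$ work for all $p$.

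The main obstacle is the rigorous justification of the test function near the origin. One must check that $\|\xi\|_H^{-\mu}\phi_R^k\eta_T$ is allowed in the weak formulation used in the paper, or is approximated by regularised test functions, and that the singular terms $\lambda\psi u\varphi/\|\xi\|_H^2$ cancel exactly against $u\,\Delta_H\varphi$. I would handle this via the transformed formulation for $v$: the weighted operator there is symmetric with respect to $\|\xi\|_H^{-2\mu}d\xi$, so the cutoff computation involves only $\phi_R$. Alternatively, I would excise a small ball and invoke the Hardy inequality (Theorem \ref{Hardy}) to show that the boundary contribution vanishes.
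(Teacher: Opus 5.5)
Your proposal is correct and is essentially the paper's argument. Your test function $\|\xi\|_H^{-\mu}\phi_R^k\eta_T$ in the $u$-formulation is exactly the paper's $\phi_1\phi_2$ in the transformed $v$-formulation of Definition \ref{Weak}, which is also how you propose to justify it, and your fixed $R_0$ plays the role of the paper's fixed small $\epsilon$ in $\phi_1=\varphi(\epsilon\|\xi\|_H)$, giving the same bound $C\big(T^{1-\frac{p+\gamma}{p-1}}+T^{1-\frac{\gamma}{p-1}}\big)$ against $cT^{\alpha+1}$ (your use of $\phi_R^k$ with $k\ge 2p'$ is, if anything, a cleaner way to control the negative powers of the cutoff produced by Young's inequality).
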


In Section \ref{Local wel}, a local well-posedness result is established in the absence of the Hardy potential (Theorem \ref{Local}), while in its presence, we show that solutions blow-up immediately once the exponent $p$ exceeds a critical value. We show that for certain parameter choices involving the Hardy potential or the memory term, the equation exhibits \textit{instantaneous blow-up}, meaning that no local-in-time weak solution exists for \eqref{eq 0.1} (see Theorems \ref{Thm Blowup O4} and \ref{Thm Blowup O5}).

\begin{table}[H]
	\centering
	\begin{tabular}{|c|c|c|} 
		\hline
		Case &Regime &Corresponding result\\ [0.5ex] 
		\hline\hline
		
		\hspace{-.25cm} \quad$\bullet$ $\gamma=0,$\,$\alpha\in (-1,\infty)$  &$p>\frac{\mu(\lambda)+2}{\mu(\lambda)}$ &Theorem \ref{Thm Blowup O4}\\
		\hspace{-3.5cm} \qquad\qquad\qquad\quad\,\,\quad$\bullet$ $\gamma>0,$\,$\alpha\in (-1,\infty)$ (under the assumption \ref{H1}) & $p>\frac{\mu(\lambda)+2}{\mu(\lambda)}$ &Theorem \ref{Thm Blowup O5}\\
		\hline
	\end{tabular}
	\caption{Instantaneous blow-up regime.}
	\label{table 2}
\end{table}


The organization of this paper is as follows. In Section \ref{Preliminaries}, we introduce preliminary definitions and the framework used throughout the paper. Section \ref{Local wel} is devoted to establishing results related to local existence and instantaneous blow-up of solutions. In Section \ref{Glo ex}, we provide the detailed proofs of the Theorems \ref{Thm Blowup O}--\ref{Thm Blowup O3}. In Section \ref{Life s}, we provide a life span estimate to local-in-time solutions.

Throughout this paper, the constant $C$ may vary from line to line, but its dependence on parameters will be clearly indicated when relevant.

\section{Preliminaries}\label{Preliminaries}
In this section, we recall the fundamental notions and differential operators associated with the Heisenberg group
$\mathbbm{H}^N.$ 

\subsection{Structure of the Heisenberg group} 
The Heisenberg group $\mathbbm{H}^N$ is a homogeneous (nilpotent)  Lie group, which is defined as the triplet $\big(\mathbb{R}^{2N+1},\circ,\{\Phi_\lambda\}\big)$, where $\mathbb{R}^{2N+1}$ is the underlying space, $\circ$ denotes the group law, and $\{\Phi_\lambda\}$ denotes the family of dilations. A Point $\xi \in \mathbbm{H}^N$ is denoted by
\begin{align}
	\xi\coloneqq(x, y,w)=(x_1,\dots,x_N,y_1,\dots,y_N,w),	
\end{align}
where $x_i,\,y_i\in\mathbb{R}$ for $1\leq i\leq N,$ and $w\in\mathbb{R}.$
The group law $\circ$ is defined for $\xi=(x,y,w)$ and $\xi'=(x',y',w')$ as
\begin{align}
	\xi\circ \xi'&=\big(x+x',y+y',w+w'+2\langle y,x'\rangle- 2\langle x,y'\rangle\big),
\end{align}
or in coordinates,	\begin{align}
	\xi\circ\xi'=\bigg(x_1+x_1',\dots,x_N+x_N',y_1+y_1',\dots,y_N+y_N',w+w'+2\sum_{i=1}^{N}\big(y_ix_i'-x_iy_i'\big)\bigg).
\end{align}	
The identity element is $\textbf{0}=(0,0,0),$ and the inverse $\xi=(x,y,w)$ is  $\xi^{-1}=(-x,-y,-w).$ Also, the dilation group
\begin{align}\label{dil}
	\Phi_\lambda: \mathbb{R}^{2N+1}\longrightarrow\mathbb{R}^{2N+1},
\end{align}
is given by
\begin{align}
	\Phi_\lambda(\xi)\coloneqq\big(\lambda x,\lambda y,\lambda^2 w\big).
\end{align} 
These operations endow $\mathbbm{H}^N$ with a non-commutative and anisotropic dilation structure, central to our analysis.

\subsection{Vector fields, norm and the sub-Laplacian}
The Heisenberg group $\mathbbm{H}^N,$ also known as the Heisenberg-Weyl group is a step-2 nilpotent Lie group with Lie algebra generated by the vector fields:
\begin{align}
	X_i=\partial_{x_i}+2y_{i}\partial_w,\, Y_{i}=\partial _{y_i}-2x_i\partial_w, \, 1\leq i\leq N,\, W=\partial_w.
\end{align}
Given a domain $\Omega\subset \mathbbm{H}^N,$ and a function $u\in C^1(\Omega,\mathbb
{R}),$ the horizontal gradient (or Heisenberg gradient) is defined as
\begin{align}
	\nabla_{H} u(\xi)\coloneqq\big(X_1u(\xi),\dots,X_{N}u(\xi),Y_1u(\xi),\dots,Y_Nu(\xi)\big).
\end{align}
A key structural property of $\mathbbm{H}^N$ lies in the commutation relations between these vector fields. For $1\leq i\leq N,$ we have 
\begin{align}
	[X_i,Y_{i}]&=X_iY_{i}-Y_{i}X_i\\
	&=(\partial_{x_i}+2y_{i}\partial_w)(\partial_{y_{i}}-2x_i\partial_w)-(\partial_{y_{i}}-2x_i\partial_w)(\partial_{x_i}+2y_{i}\partial_w)\\
	&=-{4}\partial_w, \text{ for }1\leq i\leq N.
\end{align}
This implies that the Lie algebra generated by $\{X_i,Y_i\}_{{i=1}}^N$ and their commutators spans the full tangent space:
\begin{align}
	\text{rank}\big(\text{Lie}\{X_1,X_2,\dots,X_{N}, Y_1,Y_2,\dots,Y_{N}\}(0,0)\big)=2N+1,
\end{align}
which satisfies the H\"ormander's rank condition. The family of dilations $\{\Phi_\lambda\}_{\lambda>0}$ introduced earlier has Jacobian determinant $\lambda^Q,$ where \begin{align}
	Q:=2N+2
\end{align} 
is called the homogeneous dimension of $\mathbbm{H}^N.$

Throughout this work, we adopt the standard \textit{homogeneous norm} (also known as the Kor\'anyi norm) on $\mathbbm{H}^N$$\colon$ 	
\begin{align}\label{norm}
	\|\xi\|_{H}\text{:=}\,\bigg[\bigg(\sum_{i=1}^N \big(x_i^2+y_i^2\big)^2\bigg)+w^2\bigg]^{\frac{1}{4}},
\end{align} 
where 	
\begin{align}
	\xi =(x_1,\dots,x_N,y_1,\dots,y_N,w)\in\mathbbm{H}^N.
\end{align} 
This is a symmetric homogeneous norm on $\mathbbm{H}^N.$ Given $\xi_0\in \mathbbm{H}^N\text{ and radius }\, R>0,$ we define the Kor\'anyi-ball (also called Heisenberg ball) centered at $\xi_0$ by
\begin{align}\label{Ball}
	B_R^{H}(\xi_0)\text{:=}\,\bigg\{\xi\in \mathbbm{H}^N\,:\,\|\xi_0^{-1}\circ\xi\|_{H}<R\bigg\}.
\end{align}

The sub-Laplacian (also called the Heisenberg Laplacian or Kohn-Laplace operator) on $\mathbbm{H}^N$ is the second-order self-adjoint differential operator defined by
\begin{align}
	\Delta_{H}\coloneqq\sum_{i=1}^NX_i^2+Y_i^2.	
\end{align}
Expressed in terms of Euclidean derivatives, this becomes
\begin{align}
	\Delta_{H}=\sum_{i=1}^N\frac{\partial^2}{\partial x_i^2}+\frac{\partial^2}{\partial y_i^2}+4y_i\frac{\partial^2}{\partial x_i\partial w}-4x_i\frac{\partial^2}{\partial y_i\partial w}+4\big(x_i^2+y_i^2\big)\frac{\partial^2}{\partial w^2}.
\end{align} 
\subsection{Heat semigroup}	
The heat semigroup associated with $-\Delta_{H}$ is given by
\begin{align}\label{hs}
	e^{t\Delta_{H}}f(\xi)&\coloneqq f*h(\cdot,t)(\xi)\\
	&=\int_{\mathbbm{H}^N}h(\eta^{-1}\circ\xi,t)f(\eta)d\eta,
\end{align}
for $f\in L^p(\mathbbm{H}^N),$ see \cite{Gre}. The function $h$ is the corresponding integral kernel, satisfying  the following two-sided estimate:
\begin{align}\label{Esti}
	ct^{-\frac{Q}{2}}\exp\left(-\frac{C\|\xi\|^2_{H}}{t}\right) \leq h(\xi,t)\leq Ct^{-\frac{Q}{2}}\exp\left(-\frac{c\|\xi\|^2_{H}}{t}\right),
\end{align}
for some constants $c,C>0$ and all $t>0,$ $\xi\in\mathbbm{H}^N.$ One may also see \cite{Varo} for the details. Also, we have the following result:
\begin{lem}\cite{Gre}
	The heat semigroup associated with $-\Delta_{H}$ satisfies the following estimate:
	\begin{align}\label{infty}
		\|e^{t\Delta_{H}}g\|_{L^\infty(\mathbbm{H}^N)}\leq \|g\|_{L^\infty(\mathbbm{H}^N)},
	\end{align}
	for any $g\in L^\infty(\mathbbm{H}^N)$ and $t>0.$
\end{lem}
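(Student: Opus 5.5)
The estimate follows from two properties of the heat kernel $h$ of $-\Delta_H$: it is nonnegative, and it has unit mass, $\int_{\mathbbm{H}^N}h(\eta,t)\,d\eta=1$ for every $t>0$. Positivity can be read off from the lower bound in \eqref{Esti}. The unit mass is a standard fact for the heat semigroup on a stratified group (conservation of probability, see \cite{Gre}). One way to see it: for the dilations \eqref{dil} one has $h(\xi,t)=t^{-Q/2}h(\Phi_{t^{-1/2}}\xi,1)$, and the Jacobian of $\Phi_\lambda$ is $\lambda^Q$, so the total mass $m:=\int h(\eta,t)\,d\eta$ does not depend on $t$. The semigroup property $h(\cdot,t+s)=h(\cdot,t)*h(\cdot,s)$ then gives $m=m^2$. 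Since $m>0$, again by the lower bound in \eqref{Esti}, we get $m=1$.

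With these two facts, take $g\in L^\infty(\mathbbm{H}^N)$, $t>0$ and $\xi\in\mathbbm{H}^N$. From \eqref{hs},
\begin{align}
|e^{t\Delta_H}g(\xi)|\le \int_{\mathbbm{H}^N}h(\eta^{-1}\circ\xi,t)\,|g(\eta)|\,d\eta\le \|g\|_{L^\infty(\mathbbm{H}^N)}\int_{\mathbbm{H}^N}h(\eta^{-1}\circ\xi,t)\,d\eta .
\end{align}
The Haar measure on $\mathbbm{H}^N$ is Lebesgue measure, which is invariant under left and right translations and under inversion. So the change of variables $\zeta=\eta^{-1}\circ\xi$ turns the last integral into $\int h(\zeta,t)\,d\zeta=1$. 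Taking the supremum over $\xi$ gives \eqref{infty}. The upper bound in \eqref{Esti} makes the convolution integral absolutely convergent, so $e^{t\Delta_H}g$ is well defined for $g\in L^\infty$.

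The only step that needs care is the unit mass identity. If one wants to avoid relying on \cite{Gre}, the dilation and semigroup argument above suffices, provided the semigroup property of the kernel is taken from the construction of $h$.
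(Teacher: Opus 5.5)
Your argument is correct. The paper does not prove this lemma; it only quotes it from \cite{Gre}. What you give is a self-contained derivation of the cited fact from three properties:
\begin{itemize}
\item positivity of $h$, from the lower bound in \eqref{Esti};
\item unit mass of $h(\cdot,t)$;
\item invariance of Lebesgue (Haar) measure under inversion and right translation, which justifies the substitution $\zeta=\eta^{-1}\circ\xi$.
\end{itemize}

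The dilation-plus-semigroup trick for the mass is a clean way to get $\int h(\cdot,t)\,d\zeta=1$ without computing anything. However, in the step $m=m^2\Rightarrow m=1$ you need $0<m<\infty$, not just $m>0$, since $m=\infty$ also satisfies $m=m^2$. Finiteness comes from the upper bound in \eqref{Esti}: $t^{-Q/2}\int e^{-c\|\eta\|_H^2/t}\,d\eta=\int e^{-c\|\zeta\|_H^2}\,d\zeta<\infty$, because Kor\'anyi balls of radius $r$ have volume $Cr^{Q}$. You use this implicitly when you invoke absolute convergence, but it should be stated at that step.

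The homogeneity identity $h(\xi,t)=t^{-Q/2}h(\Phi_{t^{-1/2}}\xi,1)$ and the semigroup identity for $h$ also come from the same literature as the unit-mass property. They follow from the homogeneity of $\Delta_H$ and uniqueness of the kernel. So your route reduces the lemma to more basic kernel properties rather than removing the dependence on \cite{Gre}.

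What your route buys is transparency about why the constant is exactly $1$. Positivity together with \eqref{Esti} alone would only give $\|e^{t\Delta_H}g\|_{L^\infty}\le M\|g\|_{L^\infty}$ with $M=\int h(\zeta,1)\,d\zeta$. The two-sided Gaussian bound does not pin down $M$; the semigroup argument does.
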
	

\subsection{Auxiliary tools}
We recall some standard auxiliary results used in later sections.
\begin{defn}[Beta function]
	The Beta function $B(z_1,z_2)$ is a function of two complex variables $z_1,\,z_2,$ defined as
	\begin{align}\label{Beta}
		B(z_1,z_2)=\int_0^1t^{z_1-1}(1-t)^{z_2-1}dt,\,\, \mathcal{R}e(z_i)>0,\,i=1,2.
	\end{align}
	This integral is also known as the Eulerian integral of the first kind.
\end{defn}
Also, let us recall the definition of left and right Riemann–Liouville fractional integrals occurring in our model equation and further analysis.
\begin{defn}[Riemann-Liouville fractional integral \cite{Kilbas}] For a given integrable function $u(t),$ $t\in (0,T),$ the left and right Riemann–Liouville fractional integrals of order $\gamma\in (0,1)$ are defined as 
	\begin{align}
		I_{0+}^\gamma u(t)=\frac{1}{\Gamma(\gamma)}\int_0^t(t-\tau)^{\gamma-1}u(\tau)d\tau,
	\end{align}
	and
	\begin{align}
		I_{T-}^\gamma u(t)=\frac{1}{\Gamma(\gamma)}\int_t^T(\tau-t)^{\gamma-1}u(\tau)d\tau,
	\end{align}
	respectively. Moreover, $I_{T-}^\gamma u, I_{0+}^\gamma u\To u$ a.e. as $\gamma\To 0,$ see \cite{Kilbas}. In view of it, we use the convention that $I_{T-}^0 u(t), I_{0+}^0 u(t)=u(t), t\in (0,T).$ 
\end{defn}


\begin{thm}[Lemma 2.2 \cite{Dou}]\label{Hardy} Let $f\in C_c^\infty(\mathbbm{H}^N)\setminus\{0\}.$ Then
	\begin{align}
		\int_{\mathbbm{H}^N}\frac{(f(\xi))^2}{\|\xi\|_H^{2}}\psi(\xi)d\xi\leq \left(\frac{2}{Q-2}\right)^2\int_{\mathbbm{H}^N}|\nabla_{H}f(\xi)|^2d\xi,
	\end{align}
	where $\psi=|\nabla_{H}\|\cdot\|_{H}|^2.$ Moreover, the constant $\left(\frac{2}{Q-2}\right)^2$ appearing in the inequality is sharp.
\end{thm}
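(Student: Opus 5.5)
The plan is to prove the Hardy inequality of Theorem \ref{Hardy} with a ground-state (vector-field) identity. The only geometric input is the fact that $\rho:=\|\cdot\|_H$ is, up to a constant, the fundamental solution of $\Delta_H$ (Folland's formula). Concretely, $\rho^{2-Q}$ is $\Delta_H$-harmonic on $\mathbbm{H}^N\setminus\{\textbf{0}\}$. Since $\Delta_H(\rho^{2-Q})=(2-Q)\rho^{1-Q}\big(\Delta_H\rho+(1-Q)\rho^{-1}|\nabla_H\rho|^2\big)$, this is equivalent to
\begin{align}
\Delta_H\rho=(Q-1)\frac{\psi}{\rho}\quad\text{on }\mathbbm{H}^N\setminus\{\textbf{0}\}.
\end{align}
From this we get the divergence identity
\begin{align}
\mathrm{div}_H\Big(\frac{\nabla_H\rho}{\rho}\Big)=\frac{\Delta_H\rho}{\rho}-\frac{\psi}{\rho^{2}}=(Q-2)\frac{\psi}{\rho^{2}},
\end{align}
where $\mathrm{div}_H(F_1,\dots,F_{2N})=\sum_i X_iF_i+Y_iF_{N+i}$ is the formal adjoint (up to sign) of $\nabla_H$. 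This holds because the $X_i,Y_i$ are divergence-free, so integration by parts against them has no boundary terms. I would verify the first identity by a direct computation with \eqref{norm}. Write $|z|^2=\sum_i(x_i^2+y_i^2)$, so that $\rho^4=|z|^4+w^2$. Then $\psi=|z|^2/\rho^2$, and one computes $\Delta_H(\rho^4)=4(N+1)|z|^2\cdot 2$, i.e. $\Delta_H\rho^4=2(Q+2)|z|^2$, from which the formula for $\Delta_H\rho$ follows.

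Next, for $f\in C_c^\infty(\mathbbm{H}^N)$, multiply the divergence identity by $f^2$ and integrate by parts:
\begin{align}
(Q-2)\int_{\mathbbm{H}^N}\frac{f^2\psi}{\rho^2}d\xi=-2\int_{\mathbbm{H}^N}\frac{f\,\nabla_Hf\cdot\nabla_H\rho}{\rho}d\xi\le 2\Big(\int\frac{f^2\psi}{\rho^2}\Big)^{1/2}\Big(\int|\nabla_Hf|^2\Big)^{1/2},
\end{align}
using Cauchy--Schwarz and $|\nabla_H\rho|=\sqrt{\psi}$. Dividing (the left side is finite since $\psi\le1$ and $\rho^{-2}$ is locally integrable because $Q>2$) and squaring yields the inequality with constant $(2/(Q-2))^2$. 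The integration by parts across the singularity at $\textbf{0}$ must be justified. I would excise a small Korányi ball $B^H_\varepsilon(\textbf{0})$ and bound the boundary flux by $\int_{\partial B_\varepsilon}f^2\rho^{-1}\,d\sigma=O(\varepsilon^{Q-2})\to0$; alternatively, I would regularize with $\rho_\varepsilon=(\rho^2+\varepsilon^2)^{1/2}$. This is routine.

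The sharpness of the constant is the main obstacle. I would test with $f_\varepsilon\approx\rho^{-(Q-2)/2+\varepsilon}$ near the origin and $\rho^{-(Q-2)/2-\varepsilon}$ at infinity, truncated and smoothly cut off. Then $|\nabla_Hf_\varepsilon|^2=\big(\tfrac{Q-2}{2}\mp\varepsilon\big)^2\rho^{-Q\pm2\varepsilon}\psi$, while $f_\varepsilon^2\psi/\rho^2=\rho^{-Q\pm2\varepsilon}\psi$. In homogeneous polar coordinates ($d\xi=r^{Q-1}dr\,d\sigma$) both integrals equal $\int_{\Sigma}\psi\,d\sigma$ times $\int r^{-1\pm2\varepsilon}dr\sim(2\varepsilon)^{-1}$. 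The ratio therefore tends to $((Q-2)/2)^2$ as $\varepsilon\to0$, since the cutoff errors stay bounded while the main terms diverge. A final density step from these Lipschitz test functions to $C_c^\infty$ gives sharpness.
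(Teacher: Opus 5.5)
The paper does not prove this statement; it quotes it from \cite{Dou}, so there is no in-paper argument to compare against. Your proof is correct in substance. It is the classical divergence (ground-state) argument, and all its parts work:
\begin{itemize}
\item the identity $\mathrm{div}_H(\nabla_H\rho/\rho)=(Q-2)\psi/\rho^{2}$;
\item integration by parts with an $O(\delta^{Q-2})$ error near $\textbf{0}$;
\item Cauchy--Schwarz with $|\nabla_H\rho|=\sqrt{\psi}\le 1$;
\item the near-extremals $f_\varepsilon=\min\{\rho^{-\frac{Q-2}{2}+\varepsilon},\rho^{-\frac{Q-2}{2}-\varepsilon}\}$. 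By homogeneous polar coordinates and the $0$-homogeneity of $\psi$, their Rayleigh quotient is exactly $\big(\frac{Q-2}{2}\big)^2+\varepsilon^2$.
\end{itemize}
The other standard route fits the paper's later use of Picone's identity \cite[Lemma 2.1]{Dou}. Take $v=\rho^{-\frac{Q-2}{2}}$. The computation just before \eqref{N1}, with $\mu=\frac{Q-2}{2}$, gives $-\Delta_Hv=\big(\frac{Q-2}{2}\big)^2\psi v/\rho^{2}$. Then integrate $|\nabla_Hf|^2-\nabla_H(f^2/v)\cdot\nabla_Hv\ge 0$. Both routes rest on the same identity $\Delta_H\rho=(Q-1)\psi/\rho$. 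Yours is self-contained, and its Cauchy--Schwarz step shows why powers of $\rho$ are the near-extremals. Picone packages the same computation as a pointwise inequality, which is the form the paper reuses in Section~\ref{Local wel}.

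There are three things to fix.

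\textbf{(i) The verification of $\Delta_H\rho$.} Your numbers are wrong and, as written, do not give the identity.
\begin{itemize}
\item $8(N+1)|z|^2$ is only $\Delta_H(|z|^4)$. You dropped $\Delta_H(w^2)=\sum_i(8y_i^2+8x_i^2)=8|z|^2$.
\item $2(Q+2)|z|^2$ is off by a factor $2$, and it does not even equal $8(N+1)|z|^2$.
\end{itemize}
The correct value is $\Delta_H\rho^4=8(N+2)|z|^2=4(Q+2)|z|^2$. Together with $|\nabla_H\rho^4|^2=16|z|^2\rho^4$ this gives $\Delta_H\rho=(Q+2-3)|z|^2\rho^{-3}=(Q-1)\psi/\rho$. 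Your two values would instead give $(Q-3)\psi/\rho$ and $\frac{Q-4}{2}\psi/\rho$.

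\textbf{(ii) Which norm.} You correctly use $\rho^4=|z|^4+w^2$. However, \eqref{norm} as printed has $\sum_i(x_i^2+y_i^2)^2$ in place of $|z|^4$. For $N\ge 2$ that is a different gauge, and for it $\rho^{2-Q}$ is not $\Delta_H$-harmonic. You can check this at a point with $w=0$, $z_2=\dots=z_N=0$, $z_1\neq 0$. So state explicitly that you work with the standard Kor\'anyi norm.

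\textbf{(iii) Order of limits in the sharpness step.} The truncations and cutoffs must either be removed for fixed $\varepsilon$ (your density step), or be placed at radii $\delta(\varepsilon)\to 0$, $R(\varepsilon)\to\infty$ going fast enough. With fixed radii $\delta,R$, both main terms stay bounded as $\varepsilon\to 0$ (they tend to multiples of $\log(R/\delta)$). They then no longer dominate the bounded cutoff errors, and the quotient need not approach $\big(\frac{Q-2}{2}\big)^2$.
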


Below, we state the subelliptic variant of the Sobolev inequality given by Folland and Stein \cite{FollSte}.
\begin{thm}[Folland-Stein-Sobolev inequality \cite{FollSte}]\label{Thm Folst}
	Let $1<p<\infty.$ Then, there exists a constant $C=C(p,Q)$ such that 
	\begin{align}
		\left(\int_{\mathbbm{H}^N}|\phi|^{\frac{pQ}{Q-p}}d\xi\right)^{\frac{Q-p}{pQ}} \leq C(p,Q)\left(\int_{\mathbbm{H}^N}|\nabla_H\phi|^p\right)^{\frac{1}{p}}
	\end{align}	
	for all $\phi\in C_c^\infty(\mathbbm{H}^N).$
\end{thm}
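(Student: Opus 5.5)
The statement immediately above is the Folland--Stein--Sobolev inequality of Theorem \ref{Thm Folst}, which the paper quotes from \cite{FollSte}. The plan is to reduce it to a fractional-integration estimate on a homogeneous group. The inequality only makes sense for $1<p<Q$, so that $p^*:=\frac{pQ}{Q-p}$ is finite, and I assume this throughout.

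\emph{Step 1: a representation formula.} The sub-Laplacian has a fundamental solution $\Gamma(\xi)=c_Q\|\xi\|_H^{2-Q}$ (Folland's formula for the Kor\'anyi norm). Hence, for $\phi\in C_c^\infty(\mathbbm{H}^N)$,
\begin{align}
\phi(\xi)=-\int_{\mathbbm{H}^N}\Gamma(\eta^{-1}\circ\xi)\,\Delta_H\phi(\eta)\,d\eta .
\end{align}
Since $\Delta_H=\sum_i X_i^2+Y_i^2$ and the vector fields $X_i,Y_i$ are left-invariant, I integrate by parts once in $\eta$. This moves one derivative onto the kernel. Using the homogeneity of $\Gamma$ of degree $2-Q$, each derivative $X_i\Gamma$ or $Y_i\Gamma$ is homogeneous of degree $1-Q$ and therefore bounded by $C\|\cdot\|_H^{1-Q}$. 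The result is the pointwise bound
\begin{align}
|\phi(\xi)|\le C\int_{\mathbbm{H}^N}\frac{|\nabla_H\phi(\eta)|}{\|\eta^{-1}\circ\xi\|_H^{Q-1}}\,d\eta = C\,\big(|\nabla_H\phi|*\|\cdot\|_H^{1-Q}\big)(\xi).
\end{align}

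\emph{Step 2: Hardy--Littlewood--Sobolev on the group.} The operator $g\mapsto g*\|\cdot\|_H^{-(Q-1)}$ is a fractional integral of order $1$ on a homogeneous space of dimension $Q$. I prove it maps $L^p$ to $L^{p^*}$ by Hedberg's argument. For a radius $r>0$, split the convolution into the regions $\|\eta^{-1}\circ\xi\|_H<r$ and $\|\eta^{-1}\circ\xi\|_H\ge r$.
\begin{itemize}
\item Near part: sum over dyadic Kor\'anyi annuli, using $|B_\rho^H|=c\rho^Q$ from the dilation structure. This part is bounded by $Cr\,Mg(\xi)$, where $M$ is the Hardy--Littlewood maximal function with respect to Kor\'anyi balls.
\item Far part: apply H\"older's inequality. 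Since $p<Q$, the integral $\int_{\|\zeta\|\ge r}\|\zeta\|^{-(Q-1)p'}d\zeta$ converges, and the far part is bounded by $Cr^{1-Q/p}\|g\|_p$.
\end{itemize}
Optimizing in $r$ gives $|g*\|\cdot\|^{1-Q}(\xi)|\le C\,(Mg(\xi))^{p/p^*}\,\|g\|_p^{1-p/p^*}$. The maximal operator $M$ is bounded on $L^p$ for $p>1$, because the Kor\'anyi balls together with Haar measure form a space of homogeneous type (doubling, via the Vitali covering lemma). Combining these bounds yields
\begin{align}
\|g*\|\cdot\|^{1-Q}\|_{p^*}\le C\|g\|_p .
\end{align}
Applying this with $g=|\nabla_H\phi|$ in the bound from Step 1 gives the inequality.

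\emph{Main obstacle.} The delicate step is Step 1, specifically justifying the integration by parts against the singular kernel. I would excise a small Kor\'anyi ball around the singularity and let its radius tend to zero. The boundary term is $O(\epsilon^{1-Q}\cdot\epsilon^{Q-1})\sup|\phi|\to 0$, since the horizontal surface measure of the sphere scales like $\epsilon^{Q-1}$, so the limit is justified. One also needs the homogeneity-of-derivatives fact, which follows directly from the dilations $\Phi_\lambda$ and the degree-one homogeneity of the fields $X_i,Y_i$. Everything else is standard harmonic analysis on homogeneous groups.
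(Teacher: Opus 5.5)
The paper does not prove this statement. It is quoted from Folland and Stein and used only with $p=2$, so the only thing to compare against is the citation.

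Your proof is correct and follows the classical route:
\begin{enumerate}
\item Folland's fundamental solution $c_Q\|\xi\|_H^{2-Q}$ is indeed the right kernel for the normalisation $X_i=\partial_{x_i}+2y_i\partial_w$, $Y_i=\partial_{y_i}-2x_i\partial_w$.
\item One integration by parts gives $|\phi|\le C\,|\nabla_H\phi|*\|\cdot\|_H^{1-Q}$. By symmetry of the Kor\'anyi norm, $\eta\mapsto\Gamma(\eta^{-1}\circ\xi)$ equals $\Gamma(\xi^{-1}\circ\eta)$, so the derivatives that land on the kernel really are left-invariant derivatives of $\Gamma$.
\item Fractional integration of order one on a homogeneous group of dimension $Q$ finishes the argument.
\end{enumerate}
Classical treatments usually get the last step from a weak-type estimate plus Marcinkiewicz interpolation. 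Your Hedberg splitting is a self-contained alternative that needs only $|B^H_\rho|=c\rho^Q$ and the $L^p$-boundedness of the Kor\'anyi maximal function. The exponent bookkeeping is right: $Cr\,Mg$ for the near part, $Cr^{1-Q/p}\|g\|_p$ for the far part, and $(Mg)^{p/p^*}\|g\|_p^{1-p/p^*}$ after optimising in $r$.

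Your restriction to $1<p<Q$ is a genuine correction of the statement. As printed, with $1<p<\infty$, it has no content for $p\ge Q$, where $\frac{pQ}{Q-p}$ is infinite or negative. This does not affect the paper, which only uses $p=2<4\le Q$.

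There is one slip in your last paragraph. The boundary term created by the single integration by parts carries $\Gamma$ and $\nabla_H\phi$, not $\nabla_H\Gamma$ and $\phi$. It is bounded by $C\epsilon^{2-Q}\cdot\epsilon^{Q-1}\|\nabla_H\phi\|_{\infty}=O(\epsilon)$, which does tend to zero. The product you wrote, $\epsilon^{1-Q}\cdot\epsilon^{Q-1}\sup|\phi|$, is $O(1)$ and would not vanish. That is the term which, in Green's second identity, produces $\phi(\xi)$ itself, and it does not arise here.

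With this correction, Step 1 is justified. Alternatively, note that the distributional horizontal derivatives of $\Gamma$ have no component supported at the origin: they are homogeneous of degree $1-Q>-Q$, while derivatives of $\delta$ have degree at most $-Q$.
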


\subsection{Weighted transformations}\label{WT sub}
In view of the above Hardy inequality, we define 
\begin{align}\label{lambdamu}
	\mu(\lambda)\coloneqq\frac{Q-2}{2}-\sqrt{\left(\frac{Q-2}{2}\right)^2-\lambda},
\end{align}
for $\lambda\in \big(0,\big(\frac{Q-2}{2}\big)^2\big),$ where $Q=2N+2$ is the homogeneous dimension of $\mathbb{H}^N.$ It is easy to check that
\begin{align}
	\mu^2(\lambda)-(Q-2)\mu(\lambda)+\lambda=0.
\end{align}
In view of the above identity, we compute
\begin{align}
	\nabla_{H}(\|\cdot\|_{H}^{-\mu(\lambda)})&=-\mu(\lambda)(\|\cdot\|^{-\mu(\lambda)-1}_{H})\nabla_{H}\|\cdot\|_{H}. 
\end{align}
Also,
\begin{align}
	\Delta_{H}\big(\|\cdot\|_{H}^{-\mu(\lambda)}\big)&=-\mu(\lambda)(-\mu(\lambda)+Q-2)\big(\|\cdot\|^{-\mu(\lambda)-2}_{H}\big)\big|\nabla_{H}\|\cdot\|_{H}\big|^2,
\end{align}
equivalently,
\begin{align}\label{N1}
	\|\cdot\|_{H}^{\mu(\lambda)}\Delta_{H}(\|\cdot\|_{H}^{-\mu(\lambda)})&=-\mu(\lambda)(-\mu(\lambda)+Q-2)\big(\|\cdot\|^{-2}_{H}\big)\big|\nabla_{H}\|\cdot\|_{H}\big|^2\\
	&=-\lambda(\|\cdot\|^{-2}_{H})\psi,
\end{align}
where 
\begin{align}\label{psi}
	\psi(\cdot):=|\nabla_{H}\|\cdot\|_{H}|^2.
\end{align}
\noindent Next, let us consider the transformation \begin{align}
	v(\xi,t)=\|\xi\|_{H}^{{\mu(\lambda)}}u(\xi,t).
\end{align} 
This infers
\begin{align}\label{L^s}
	L_{H}v&\coloneqq-\nabla_{H}(\|\cdot\|^{-2\mu(\lambda)}_{H}\nabla_{H}v)\\
	&=-\|\cdot\|^{-2\mu(\lambda)}_{H}\Delta_{{H}}v-(\nabla_{H}v)\big(\nabla_{H}\|\cdot\|^{-2\mu(\lambda)}_{H}\big)\\
	&=-\|\cdot\|^{-2\mu(\lambda)}_{H}\Delta_{{H}}v-2\|\cdot\|^{-\mu(\lambda)}_{H}(\nabla_{H}v)\big(\nabla_{H}\|\cdot\|^{-\mu(\lambda)}_{H}\big),
\end{align}
where we have used the identity
\begin{align}
	\nabla_{H}\|\cdot\|^{-2\mu(\lambda)}_{H}=2\|\cdot\|^{-\mu(\lambda)}\big(\nabla_{H}\|\cdot\|^{-\mu(\lambda)}_{H}\big).
\end{align}
Furthermore, utilizing \eqref{N1}, we get
\begin{align}
	L_{H}v&=-\|\cdot\|^{-2\mu(\lambda)}_{H}\Delta_{{H}}v-2\|\cdot\|^{-\mu(\lambda)}_{H}(\nabla_{H}v)\big(\nabla_{H}\|\cdot\|^{-\mu(\lambda)}_{H}\big)\\
	&=-\|\cdot\|^{-2\mu(\lambda)}_{H}\Delta_{{H}}v-2\|\cdot\|^{-\mu(\lambda)}_{H}(\nabla_{H}v)\big(\nabla_{H}\|\cdot\|^{-\mu(\lambda)}_{H}\big)\\
	&\qquad-\|\cdot\|_{H}^{-\mu(\lambda)}\Delta_{H}\big(\|\cdot\|_{H}^{-\mu(\lambda)}\big)v-\lambda\big(\|\cdot\|^{-2-2\mu(\lambda)}_{H}\big)\psi v\\
	&=-\|\cdot\|^{-\mu(\lambda)}_{H}\left(\|\cdot\|^{-\mu(\lambda)}_{H}\Delta_{{H}}v+2(\nabla_{H}v)\big(\nabla_{H}\|\cdot\|^{-\mu(\lambda)}_{H}\big)+v\Delta_{H}\big(\|\cdot\|_{H}^{-\mu(\lambda)}\big)\right)\\
	&\qquad-\lambda\big(\|\cdot\|^{-2-2\mu(\lambda)}_{H}\big)\psi v\\
	&=-\|\cdot\|^{-\mu(\lambda)}_{H}\big(\Delta_{{H}}\big(v\|\cdot\|^{-\mu(\lambda)}_{H}\big)\big)-\lambda\big(\|\cdot\|^{-2-2\mu(\lambda)}_{H}\big)\psi v\\
	&=-\|\cdot\|^{-\mu(\lambda)}_{H}\Delta_{{H}}u-\lambda\big(\|\cdot\|^{-2-2\mu(\lambda)}_{H}\big)\psi \|\cdot\|_H^{\mu(\lambda)}u\\
	&=-\|\cdot\|^{-\mu(\lambda)}_{H}\left(\Delta_{{H}}u+\lambda\big(\|\cdot\|^{-2}_{H}\big)\psi u\right).
\end{align}
This yields 
\begin{align}
	L_{H}v=-\|\cdot\|^{-\mu(\lambda)}_{H}\left(\Delta_{{H}}u+\lambda \frac{\psi u}{\|\cdot\|^2_{H}}\right),
\end{align}
where $\psi$ and $L_{H}$ are given by \eqref{psi} and \eqref{L^s}, respectively. 
This together with \eqref{eq 0.1} infers that $v$ solves the following problem:
\begin{align}\label{eq equi}
	\begin{cases}
		\|\cdot\|_{H}^{-2\mu(\lambda)}\partial_tv+L_{H}v=\frac{1}{\Gamma(\gamma)}\|\cdot\|_{H}^{-(p+1)\mu(\lambda)}\int_0^t(t-s)^{\gamma-1}|v(s)|^{p}ds+\|\cdot\|_{H}^{-\mu(\lambda)}t^\alpha f &\text{in } \,\mathbbm{H}^N\times (0,T),\\
		v(0,\cdot)=\|\cdot\|_{H}^{{\mu(\lambda)}}u_0(\cdot) &\text{in } \,\mathbbm{H}^N,
	\end{cases}	
\end{align}
where $\mu(\lambda)$ is defined in \eqref{lambdamu}. 
%
%

\subsection{Weak solution}
In view of the above transformation, one can say that \eqref{eq 0.1} is equivalent to \eqref{eq equi}. Using this fact, we define the notion of weak solutions to \eqref{eq 0.1} as follows:
\begin{defn}[Weak solution]\label{Weak} Let $v(0)\in L^1_{\text{loc}}(\mathbbm{H}^N, \|\xi\|^{-2\mu(\lambda)}d\xi),f\in L^1_{\text{loc}}(\mathbbm{H}^N, \|\xi\|^{-\mu(\lambda)}d\xi).$ Then $v\in L^p_{\text{loc}}((0,T),L^p_{\text{loc}}(\mathbbm{H}^N,\|\xi\|_H^{-{(p+1)\mu(\lambda)}}d\xi))\cap L^1_{\text{loc}}((0,T),L^1_{\text{loc}}(\mathbbm{H}^N,\|\xi\|_H^{-2\mu(\lambda)}d\xi)) $ is called a local-in-time weak solution to \eqref{eq equi} (and equivalently, to \eqref{eq 0.1}) if 
	\begin{align}
		-\int_0^T\int_{\mathbbm{H}^N}	v(\|\xi\|_{H}^{-{2\mu(\lambda)}}\partial_t\phi-L_{H}\phi)d\xi dt &=\frac{1}{\Gamma(\gamma)}\int_0^T\int_{\mathbbm{H}^N}\|\xi\|_{H}^{{-(p+1)\mu(\lambda)}}|v|^pI^\gamma_{T^-}\phi d\xi dt\\
		&\qquad +\int_{\mathbbm{H}^N}\|\xi\|_{H}^{{-2\mu(\lambda)}}v(0,\xi)\phi(0,\xi)d\xi\\
		&\qquad+\int_0^T\int_{\mathbbm{H}^N}\|\xi\|_{H}^{{-\mu(\lambda)}}t^\alpha f\phi d\xi dt
	\end{align}
	holds for any non-negative test function $\phi \in C_c^1([0,T), C_c^2(\mathbbm{H}^N).$ Moreover, $v$ is called global-in-time weak solution if $T=\infty.$
	
\end{defn}
\section{Local existence and instantaneous blow-up}\label{Local wel}
In this section, we investigate the local well-posedness and nonexistence of solutions to problem \eqref{eq 0.1} on the Heisenberg group. Our analysis reveals a sharp contrast between the problem with and without the Hardy potential. When the Hardy potential is present, and $\gamma=0,$ the nonlinear term can induce instantaneous blow-up, preventing the existence of local-in-time weak solutions once the exponent exceeds a critical threshold. Moreover, under a mild temporal monotonicity assumption on the solution, we extend this nonexistence result to the case involving a nonlocal memory term, that is, $\gamma>0.$

On the other hand, in the absence of the Hardy potential, we establish local-in-time existence and uniqueness of mild solutions in $L^\infty(\mathbbm{H}^N)$ by means of a fixed-point argument.

We begin with the case $\gamma=0$, where the Hardy potential induces instantaneous blow-up for sufficiently strong nonlinearities.	
\begin{thm}\label{Thm Blowup O4}
	Let $\gamma=0,$  $\alpha\in (-1,\infty),$ and let  $0\leq u_0,f\in L^\infty(\mathbbm{H}^N).$ Then, for any $p>
	\frac{\mu(\lambda)+2}{\mu(\lambda)},$ problem \eqref{eq 0.1} does not admit a local-in-time solution.
\end{thm}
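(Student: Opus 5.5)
We argue by contradiction: assume that for some $T>0$ there is a local-in-time weak solution $u$, equivalently $v=\|\cdot\|_H^{\mu(\lambda)}u$ solving \eqref{eq equi} with $\gamma=0$. Note that $u\ge 0$, because $u_0,f\ge 0$ and all source terms are nonnegative. The plan has two parts. First we show that $u$ is bounded below near the origin by a multiple of the singular profile $\|\xi\|_H^{-\mu(\lambda)}$. Then we check that this profile makes the nonlinear term $\|\xi\|_H^{-(p+1)\mu(\lambda)}|v|^p$, which the definition of weak solution (Definition \ref{Weak}) requires to be locally integrable, fail to be integrable at $\xi=\mathbf{0}$ exactly when $p>\frac{\mu(\lambda)+2}{\mu(\lambda)}$.

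\textbf{Step 1 (positivity away from the origin).} Drop the Hardy term and the nonlinearity, since both are nonnegative. Then $u$ is a supersolution of $\partial_t u-\Delta_H u\ge t^\alpha f$. Comparing with the heat semigroup \eqref{hs} and using the lower Gaussian bound \eqref{Esti}, we get
\begin{align}
u(\xi,t)\ge \int_0^t s^\alpha e^{(t-s)\Delta_H}f\,ds\ge c\,t^{1+\alpha}
\end{align}
on a fixed ball $B^H_1(\mathbf 0)$ for $t\in(t_0,T)$. This uses $f\not\equiv 0$; if $f\equiv 0$ one uses $u_0\not\equiv0$ instead, and the trivial data must be excluded or handled separately. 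This gives a positive lower bound $v\ge c$ on an annulus $\{1/2<\|\xi\|_H<1\}\times(t_0,T)$.

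\textbf{Step 2 (propagation of the singular profile).} Work with the weighted equation, which reads $\|\xi\|_H^{-2\mu}\partial_t v+L_Hv\ge 0$. Its stationary solutions include the constants, which correspond to $u=\|\xi\|_H^{-\mu}$. We will show that $v\ge c'>0$ on $B^H_{1/2}(\mathbf 0)\times(t_1,T)$. The intended tool is a weak Harnack inequality for the degenerate weighted operator $\|\xi\|_H^{-2\mu}\partial_t+L_H$. The weight $\|\xi\|_H^{-2\mu}$ is of Muckenhoupt type when $2\mu<Q$, which holds because $\mu<\frac{Q-2}{2}$. As a fallback, we can use a comparison argument on $B^H_{1}$: impose boundary data $c$ on the sphere and compare $v$ with the solution of the linear weighted problem with zero initial data. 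That solution tends to the constant $c$ as time grows, so it is bounded below on smaller balls after a positive time. Translated back, this gives $u(\xi,t)\ge c'\|\xi\|_H^{-\mu(\lambda)}$ near $\mathbf 0$ for $t\in(t_1,T)$. This is the main obstacle: we need a Harnack or comparison principle for $L_H$ with singular weights in the sub-Riemannian setting, valid for weak solutions. Here we rely on the Heisenberg-group Harnack-type inequality mentioned in the abstract.

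\textbf{Step 3 (non-integrability).} Take a nonnegative test function $\phi$ with $\phi\equiv1$ on $B^H_{1/2}\times[t_1,t_2]$. The weak formulation contains
\begin{align}
\int_{t_1}^{t_2}\int_{B^H_{1/2}}\|\xi\|_H^{-(p+1)\mu}v^p\,d\xi\,dt\ge c\int_{B^H_{1/2}}\|\xi\|_H^{-(p+1)\mu}\,d\xi .
\end{align}
Using the polar coordinates of the homogeneous structure, with $d\xi\sim r^{Q-1}dr$, this integral is finite iff $(p+1)\mu<Q$. Equivalently, $u^p\sim\|\xi\|_H^{-p\mu}$, and $u^p$ must be $L^1_{\rm loc}$ integrable against $\|\xi\|^{-\mu}_H$ in the natural pairing. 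By contrast, the linear terms have finite pairings, for instance $\|\xi\|_H^{-2\mu}v\,\partial_t\phi$ is integrable since $2\mu<Q$.

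We still need the sharp threshold $\frac{\mu+2}{\mu}$ rather than the cruder condition $(p+1)\mu\ge Q$. To get it, we test with $\phi_\varepsilon$ concentrated in $B^H_\varepsilon$, scaled parabolically so that $|L_H\phi_\varepsilon|\lesssim\varepsilon^{-2}\|\xi\|_H^{-2\mu}$. The left-hand side is then $\lesssim \varepsilon^{-2}\int_{B_\varepsilon}\|\xi\|_H^{-2\mu}v\sim\varepsilon^{Q-2-2\mu}$. The nonlinear side is $\gtrsim\varepsilon^{Q-(p+1)\mu}$. Letting $\varepsilon\to0$ gives a contradiction precisely when $Q-(p+1)\mu<Q-2-2\mu$, that is, $p\mu>\mu+2$, which is the condition $p>\frac{\mu(\lambda)+2}{\mu(\lambda)}$.
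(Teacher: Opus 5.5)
Your route is different from the paper's and can be made to work with exactly the same threshold. As written, however, Step 3 has a gap. When you bound the linear side by $\varepsilon^{-2}\int_{B^H_{2\varepsilon}}\|\xi\|_H^{-2\mu}v\,d\xi\sim\varepsilon^{Q-2-2\mu}$, you are using an \emph{upper} bound $v\lesssim 1$ near the origin, uniformly in $\varepsilon$. Nothing provides this. Step 2 gives only $v\ge c'$, and a hypothetical weak solution is only known to have the integrability of Definition \ref{Weak}, so $v$ may be unbounded at $\mathbf{0}$.

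The repair is the $\varepsilon$-Young absorption the paper uses in Section \ref{Glo ex}. Take $\phi=\chi_\varepsilon^k(\xi)\eta(t)$, where $\chi_\varepsilon=\varphi(\|\xi\|_H/\varepsilon)$, $k\ge \frac{2p}{p-1}$, and $\eta\in C_c^\infty((t_1,T))$ is a high power of a cutoff. On the annulus $\{\varepsilon\le\|\xi\|_H\le 2\varepsilon\}$ one has $|L_H\chi_\varepsilon^k|\lesssim \chi_\varepsilon^{k-2}\varepsilon^{-2-2\mu}$. Splitting $v|L_H\phi|$ and $v\|\xi\|_H^{-2\mu}|\partial_t\phi|$ against $\|\xi\|_H^{-(p+1)\mu}v^p\phi$ then yields
\[
\int_0^T\!\!\int_{\mathbbm{H}^N}\|\xi\|_H^{-(p+1)\mu}v^p\phi\,d\xi\,dt\lesssim \varepsilon^{Q-\mu-\frac{2p}{p-1}}+\varepsilon^{Q-\mu}.
\]
On the other hand, $v\ge c'$ bounds the left side below by $c\,\varepsilon^{Q-(p+1)\mu}$ when $(p+1)\mu<Q$; otherwise your non-integrability remark already concludes. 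Hence $1\lesssim \varepsilon^{p\mu-\frac{2p}{p-1}}+\varepsilon^{p\mu}$, which is absurd as $\varepsilon\to0$ exactly when $\mu(p-1)>2$, i.e. $p>\frac{\mu+2}{\mu}$.

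With this fix your proof is a genuine alternative. The paper does not work in the weak formulation here. It approximates on a small ball $B^H_r$, tests with $\phi^2/u_n$, and uses the Picone identity to get $\int\phi^2\log u(\cdot,T)+T\int|\nabla_H\phi|^2\ge\int_0^T\!\int u^{p-1}\phi^2$. It controls the logarithmic term by H\"older and Folland--Stein, then inserts $u\ge C\|\xi\|_H^{-\mu}$ to reach $\int\|\xi\|_H^{-\mu(p-1)}\phi^2\lesssim\int|\nabla_H\phi|^2$ for all $\phi\in C_c^\infty(B^H_r)$, which fails under dilations when $\mu(p-1)>2$.

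Both arguments rest on the same lower bound from your Step 2; the paper also just invokes a Harnack inequality there. Both express the same fact: $u^{p-1}\gtrsim\|\xi\|_H^{-\mu(p-1)}$ is a potential more singular than the inverse-square one. The paper's version makes this Baras--Goldstein mechanism explicit. The price is the approximation scheme, the Picone identity, and an unchecked bound on $\int_{B^H_r}(\log u(\xi,T))^{Q/2}d\xi$. Yours stays inside Definition \ref{Weak} and uses only the lower bound. Since the nonlinearity enters only through $\int\!\int\|\xi\|_H^{-(p+1)\mu}|v|^pI^\gamma_{T^-}\phi$, the same computation, with $I^\gamma_{T^-}\phi$ as the Young weight, also seems to cover $\gamma>0$ without assumption \ref{H1}.

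Your observation that nontrivial data must be assumed is correct: for $u_0=f\equiv0$ the zero function is a solution.
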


To handle the case when $\gamma>0$, we introduce a mild temporal monotonicity condition ensuring that the solution does not decay too rapidly in short time intervals.
\begin{enumerate}[label=\upshape {(H\arabic*)}, ref=(H\arabic*)]	
	\item\label{H1} Assume that there exist constants $\delta>0,$ $\theta\in (0,1]$ such that for each $t\in (0,T),$ and $s\in [t-\delta,t],$ we have
	\begin{align}
		u(s,\xi)\geq \theta u(t,\xi) \text{ for all }\xi\in\mathbbm{H}^N.
	\end{align} 
\end{enumerate}	
Under the above assumption, we obtain a corresponding nonexistence result for problem \eqref{eq 0.1}.
\begin{thm}\label{Thm Blowup O5}
	Let $\gamma>0,$ $\alpha\in (-1,\infty),$ and let  $0\leq u_0,f\in L^\infty(\mathbbm{H}^N).$ Assume that \ref{H1} holds. Then, for any $p>
	\frac{\mu(\lambda)+2}{\mu(\lambda)},$ problem \eqref{eq 0.1} does not admit a local-in-time weak solution.
\end{thm}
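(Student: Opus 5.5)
The plan is to reduce Theorem \ref{Thm Blowup O5} to the mechanism behind Theorem \ref{Thm Blowup O4}. Assumption \ref{H1} turns the memory term into a local power nonlinearity with a time weight. Suppose, for contradiction, that $u\ge 0$ is a local-in-time weak solution on $(0,T)$, and shrink $T$ so that $T\le\delta$. For $t\in(0,T)$ and $\tau\in[0,t]\subset[t-\delta,t]$, \ref{H1} gives $u(\tau)\ge\theta u(t)$. Hence
\begin{align}
\frac{1}{\Gamma(\gamma)}\int_0^t(t-\tau)^{\gamma-1}|u(\tau)|^p\,d\tau\;\ge\;\frac{\theta^p}{\Gamma(\gamma+1)}\,t^{\gamma}\,u(t)^p .
\end{align}
So $u$ is a supersolution of $\partial_t u-\Delta_H u-\lambda\psi u/\|\xi\|_H^2\ge c_0 t^\gamma u^p+t^\alpha f$. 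In the transformed variable $v=\|\xi\|_H^{\mu(\lambda)}u$, the weak identity of Definition \ref{Weak} therefore holds as an inequality "$\ge$", with the memory term replaced by $c_0\int_0^T\int_{\mathbbm{H}^N}\|\xi\|_H^{-(p+1)\mu(\lambda)}t^\gamma|v|^p\phi\,d\xi\,dt$. The factor $t^\gamma$ only modifies time exponents, and the argument below is local in space at a fixed time window, so it is harmless.

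The second step is a quantitative positivity bound for $v$ near the origin. I will use the Duhamel/comparison structure with the positive forcing $t^\alpha f$: the Hardy term and the nonlinearity are nonnegative, and the heat kernel satisfies the lower bound \eqref{Esti}. From these I expect to show that for some $0<t_0<t_1<T$ and some $r_0>0$,
\begin{align}
u(\xi,t)\ge c\,\|\xi\|_H^{-\mu(\lambda)}\quad\text{for }\|\xi\|_H<r_0,\ t\in(t_0,t_1),
\end{align}
that is, $v\ge c>0$ there. This is the Baras--Goldstein type singular profile forced by the Hardy potential. I plan to obtain it from the Harnack-type inequality mentioned in the abstract, applied to the operator $\|\cdot\|_H^{-2\mu}\partial_t+L_H$. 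Alternatively, I would compare $u$ from below with the solution of the linear problem $\partial_t w-\Delta_H w-\lambda\psi w/\|\xi\|_H^2=t^\alpha f$, whose singular behaviour $\|\xi\|_H^{-\mu(\lambda)}$ at $0$ comes from $\|\cdot\|_H^{-\mu(\lambda)}$ solving the stationary equation, by \eqref{N1}. I expect this step to be the main obstacle, since we only know $f\ge 0$, $f\not\equiv 0$, $f\in L^\infty$. I would first establish positivity of $u$ on a ball at some time (heat kernel lower bound). I would then propagate it inward with a subsolution of the form $\epsilon\,\|\xi\|_H^{-\mu}\zeta(\xi)$, with $\zeta$ a cutoff.

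The third step is a test-function argument at scale $R\to 0$. Take $\phi(\xi,t)=\eta(t)\Phi(\Phi_{1/R}\xi)$ with $\eta\in C_c^1((t_0,t_1))$ and $\Phi\in C_c^2(B_2^H(0))$, $\Phi\equiv1$ on $B_1^H(0)$. Estimate the linear terms by H\"older and Young against the nonlinear term, using $|\partial_t\phi|\lesssim 1$ and $|L_H\phi|\lesssim R^{-2-2\mu}$ on the support (the term $\nabla_H\|\cdot\|_H^{-2\mu}\cdot\nabla_H\phi$ has the same scaling). Absorbing half of the nonlinear term, this gives
\begin{align}
\int_{t_0}^{t_1}\!\int_{B_R^H(0)}\|\xi\|_H^{-(p+1)\mu}t^\gamma v^p\,d\xi\,dt\;\le\;C\,\Big(\int\!\!\int_{\operatorname{supp}\phi} v\,\|\xi\|_H^{-2\mu-2}\,d\xi\,dt\Big)+C\;\lesssim\;C\,R^{Q-2\mu-2}+C.
\end{align}
Here the right side is controlled using $v\in L^1_{loc}$ with the weight, via Hölder once more. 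On the other hand, by step two the left side is $\ge c\,R^{Q-(p+1)\mu}$. If $p>\frac{\mu(\lambda)+2}{\mu(\lambda)}$, then $(p+1)\mu>2\mu+2$, and in fact the left side is $+\infty$ whenever $(p+1)\mu\ge Q$. Letting $R\to0$ then contradicts the inequality. More simply, one can observe directly that $\|\xi\|_H^{-(p+1)\mu}v^p\notin L^1_{loc}$ near the origin while the weak formulation forces its finiteness on the support of $\phi$, so no local-in-time weak solution exists. The exponent bookkeeping in this step is routine once the positivity estimate of step two is in hand.
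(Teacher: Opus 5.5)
Your route differs from the paper's in how the contradiction is produced. The paper builds increasing approximations $u_n$ on a small ball and tests with $\phi^2/u_n$. The logarithmic identity and Picone's inequality then give $\int\phi^2\log u(T)+T\int|\nabla_H\phi|^2\ge c\int_0^T\!\int u^{p-1}\phi^2$. With the Harnack bound $u\ge C\|\xi\|_H^{-\mu(\lambda)}$, and H\"older plus Folland--Stein on the logarithmic term, this contradicts the Hardy inequality, since $\|\xi\|_H^{-\mu(\lambda)(p-1)}$ is more singular than $\|\xi\|_H^{-2}$. You instead stay inside Definition~\ref{Weak} and run a scaling test-function argument as $R\to0$.

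Your positivity step is the same ingredient the paper takes from comparison and a Harnack inequality, so you assume no more than the paper there. You also rightly need $f\not\equiv 0$: for $u_0=f=0$ the zero function is a solution satisfying \ref{H1}. Your use of \ref{H1} with $T\le\delta$, which yields the factor $t^\gamma$, is cleaner than the paper's, which integrates over $[t-\delta,t]$ even when $t<\delta$. Your route has a real advantage: it works directly with the weak solutions the theorem is about. It needs no approximation scheme, no Picone identity, and no control of $\log u(T)$ in $L^{Q/2}$.

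As written, however, Step 3 does not close. The bound $\int\!\!\int v\|\xi\|_H^{-2\mu-2}\lesssim R^{Q-2\mu-2}$ needs $v$ bounded on the annulus, which $v\in L^1_{\mathrm{loc}}$ does not give. The additive $+C$ is fatal: if $(p+1)\mu<Q$, your lower bound $cR^{Q-(p+1)\mu}$ tends to $0$, so it cannot contradict a right-hand side bounded below by $C$. Your ``more simply'' remark only covers $p\ge\frac{Q-\mu}{\mu}$. Since $2\mu+2<Q$, the range $\frac{\mu+2}{\mu}<p<\frac{Q-\mu}{\mu}$ is nonempty, and there $\|\xi\|_H^{-(p+1)\mu}$ is locally integrable.

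The repair is the usual $\varepsilon$-Young absorption with a $v$-free remainder. Take $\phi=\eta(t)\,\chi^k(\Phi_{1/R}\xi)$, where $\chi\equiv1$ on $B^H_1(0)$, $\operatorname{supp}\chi\subset B^H_2(0)$, $k\ge 2p/(p-1)$, and $\eta\in C_c^1((t_0,t_1))$, so there is no initial term. Set $A=\int\!\!\int\|\xi\|_H^{-(p+1)\mu}t^\gamma v^p\phi$, which is finite by Definition~\ref{Weak}. Since $|L_H\phi|\lesssim R^{-2-2\mu}$ on $R\le\|\xi\|_H\le 2R$ and $L_H\phi=0$ elsewhere,
\[ A\lesssim \int\!\!\int\|\xi\|_H^{\frac{(p+1)\mu}{p-1}}t^{-\frac{\gamma}{p-1}}|L_H\phi|^{\frac{p}{p-1}}\phi^{-\frac{1}{p-1}}+\int\!\!\int\|\xi\|_H^{-\mu}t^{-\frac{\gamma}{p-1}}|\eta'|^{\frac{p}{p-1}}\eta^{-\frac{1}{p-1}}\chi^k(\Phi_{1/R}\xi)\lesssim R^{Q-\mu-\frac{2p}{p-1}}+R^{Q-\mu}. \]
Combined with $A\ge cR^{Q-(p+1)\mu}$, this gives $c\lesssim R^{\frac{p((p-1)\mu-2)}{p-1}}+R^{p\mu}$. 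That is a contradiction as $R\to0$ exactly when $(p-1)\mu>2$, i.e.\ $p>\frac{\mu+2}{\mu}$. With this correction your argument is a valid alternative to the paper's, modulo the positivity estimate that both you and the paper take from a Harnack inequality.
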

\begin{rem}
	Condition \ref{H1} is a mild temporal monotonicity assumption over short intervals. It ensures that the memory term remains comparable to its instantaneous value.
	It generalizes the standard monotonicity condition $u_t\ge 0$ over some interval (as used by Li et al. \cite{Li}), corresponding to the case $\theta=1$. Hence, \ref{H1} can be viewed as a variant of this classical assumption.
\end{rem}	
We now turn to the case without the Hardy potential, where local solutions can be constructed by fixed-point arguments. Consider
\begin{align}\label{eq without}
	\begin{cases}
		\partial_tu-\Delta_{H}u=\frac{1}{\Gamma(\gamma)}\int_0^t(t-\tau)^{\gamma-1}|u(\tau)|^{p}d\tau+t^\alpha f &\text{ in } \,\mathbbm{H}^N\times (0,T)\\
		u_0(\xi)=u(0,\xi) &\text{ in }\mathbbm{H}^N.
	\end{cases}
\end{align}
Now, we give the notions of mild and weak solutions to the above problem.
\begin{defn}[Mild solution]\label{Mild}
	A function $u\in C([0,T],L^\infty(\mathbbm{H}^N))$ is called a local-in-time mild solution to \eqref{eq without} if
	\begin{align}\label{eq Mild}
		u(t)=e^{t\Delta_{H}}u_0+\int_0^te^{(t-\tau)\Delta_{H}}\left(\frac{1}{\Gamma (\gamma)}\int_0^\tau(\tau-s)^{-\gamma}|u(s)|^{p}ds+\tau^\alpha f\right)d\tau
	\end{align}
	holds for any $t\in (0,T].$
\end{defn}

\begin{defn}[Weak solution]\label{Weak2}
	Let $u_0,f\in L^1_{\text{loc}}(\mathbbm{H}^N).$ Then $u\in L^p_{\text{loc}}((0,T),L^p_{\text{loc}}(\mathbbm{H}^N))$ is called a local-in-time weak solution to \eqref{eq without} if 
	\begin{align}
		-\int_0^T\int_{\mathbbm{H}^N}u(\partial_t\phi+\Delta_{H}\phi)d\xi dt =\int_0^T\int_{\mathbbm{H}^N}|u|^pI_{T^-}^\gamma\phi d\xi dt +\int_{\mathbbm{H}^N}u_0\phi(0,\cdot)d\xi +\int_0^T\int_{\mathbbm{H}^N} \tau^\alpha f\phi d\xi d\tau,
	\end{align}
	holds for any non-negative test function $\phi\in C_c^1([0,T),C_c^2(\mathbbm{H}^N)).$ 
	Moreover, $u$ is called a global-in-time weak solution if $T=+\infty.$ 
\end{defn}
The following lemma links these two notions of solutions to \eqref{eq without}. 
\begin{lem}\label{mild to weak}
	Let $u_0\in L^\infty(\mathbbm{H}^N).$ Let $u$ be a mild solution to \eqref{eq without} in $\mathbbm{H}^N\times [0,T),$ for $T>0.$ Then $u$ is also a weak solution to \eqref{eq without} in $\mathbbm{H}^N\times [0,T).$
\end{lem}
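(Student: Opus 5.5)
The plan is to test the mild formulation \eqref{eq Mild} against a non-negative test function $\phi\in C_c^1([0,T),C_c^2(\mathbbm{H}^N))$ and use the self-adjointness of the heat semigroup to move everything onto $\phi$. Write $F(\tau):=\frac{1}{\Gamma(\gamma)}\int_0^\tau(\tau-s)^{\gamma-1}|u(s)|^pds+\tau^\alpha f$. Here I read the kernel in \eqref{eq Mild} as $(\tau-s)^{\gamma-1}$, matching \eqref{eq without}. Since $u\in C([0,T],L^\infty)$, $F(\tau)\in L^\infty$ with $\|F(\tau)\|_\infty\le C(\tau^\gamma+\tau^\alpha)$. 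This bound is integrable near $0$ because $\alpha>-1$ and $\gamma\ge0$; we assume $f\in L^\infty$, as in the local existence setting. Hence every term below is well defined, and Fubini applies since $\phi$ has compact support in space–time.

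\textbf{Step 1 (pairing).} Set $\Phi(t):=\int_{\mathbbm{H}^N}u(t)\phi(t)\,d\xi$. Using the symmetry of $h$, namely $h(\eta^{-1}\circ\xi,t)=h(\xi^{-1}\circ\eta,t)$, we get $\int (e^{t\Delta_H}g)\phi\,d\xi=\int g\, e^{t\Delta_H}\phi\,d\xi$. This gives
\begin{align}
\Phi(t)=\int u_0\,e^{t\Delta_H}\phi(t)\,d\xi+\int_0^t\int F(\tau)\,e^{(t-\tau)\Delta_H}\phi(t)\,d\xi\, d\tau .
\end{align}
An alternative is the standard route: show that $\frac{d}{dt}\int u(t)\phi(t)=\int u(t)(\partial_t\phi+\Delta_H\phi)+\int F(t)\phi(t)$ in the distributional sense on $(0,T)$. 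To do this, compute the difference quotient $(\Phi(t+h)-\Phi(t))/h$ using the semigroup property $u(t+h)=e^{h\Delta_H}u(t)+\int_t^{t+h}e^{(t+h-\tau)\Delta_H}F(\tau)d\tau$. Then
\begin{align}
\tfrac1h\big(\Phi(t+h)-\Phi(t)\big)=\int u(t)\tfrac{e^{h\Delta_H}\phi(t+h)-\phi(t)}{h}\,d\xi+\tfrac1h\int_t^{t+h}\!\!\int F(\tau)e^{(t+h-\tau)\Delta_H}\phi(t+h)\,d\xi d\tau .
\end{align}
For the first term, $\phi\in C^2_c$ gives $\frac{e^{h\Delta_H}\phi-\phi}{h}\to\Delta_H\phi$, and this convergence holds in $L^1$ because $\Delta_H\phi\in L^1$ and the semigroup is strongly continuous on $L^1$. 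Together with the $C^1$ regularity in time, the first term converges to $\int u(t)(\partial_t\phi+\Delta_H\phi)$. The second term converges to $\int F(t)\phi(t)$ for a.e. $t$ by Lebesgue differentiation. For this we need $\tau\mapsto F(\tau)$ to be weakly continuous against $L^1$ functions, and $e^{s\Delta_H}\phi\to\phi$ in $L^1$ as $s\to0$.

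\textbf{Step 2 (integrate in time).} Integrate over $t\in(0,T)$. Since $\phi(T)=0$ near $T$ and $\Phi(0)=\int u_0\phi(0)$ (by continuity of $u$ at $0$), we obtain
\begin{align}
-\int_0^T\!\!\int u(\partial_t\phi+\Delta_H\phi)=\int u_0\phi(0)+\int_0^T\!\!\int F\phi .
\end{align}
Fubini in the memory term gives $\int_0^T\frac{1}{\Gamma(\gamma)}\int_0^t(t-s)^{\gamma-1}|u(s)|^p\,ds\,\phi(t)\,dt=\int_0^T|u(s)|^pI^\gamma_{T-}\phi(s)\,ds$. The case $\gamma=0$ is trivial by convention. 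Since $u\in L^\infty_{loc}$ implies $u\in L^p_{loc}$, this is exactly Definition \ref{Weak2}.

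\textbf{Main obstacle.} The delicate point is justifying the differentiation of $\Phi$, in particular the a.e. limit of the Duhamel increment. The difficulty is that $F$ is only bounded, with a possibly singular factor $\tau^\alpha$ when $\alpha<0$. To avoid pointwise issues, I would prove the identity in the integrated form against $\chi(t)$. That is, I would show the absolute continuity of $\Phi$ on $[\epsilon,T)$ with the stated derivative, let $\epsilon\to0$ using $\|F(\tau)\|_\infty\in L^1(0,T)$, and use $u(t)\to u_0$ weak-$*$ in $L^\infty$ to identify the boundary term.
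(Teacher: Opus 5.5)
Your proposal is correct and matches the paper's approach. The paper gives no proof of its own and defers to Lemma 3.2 of \cite{Berik}; your route (pair the Duhamel formula with $\phi$, differentiate $t\mapsto\int u(t)\phi(t)\,d\xi$ in the integrated sense using the semigroup property and self-adjointness of $e^{t\Delta_H}$, integrate in time, then use Fubini to produce $I^\gamma_{T-}\phi$) is the standard argument for this kind of mild-to-weak lemma. You were also right to read the kernel in \eqref{eq Mild} as $(\tau-s)^{\gamma-1}$: with the printed $(\tau-s)^{-\gamma}$ the mild and weak formulations would not match.
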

\noindent We avoid the proof as the arguments follow similar lines of proof as in Lemma 3.2 \cite{Berik}.
Now, we state the local-well-posedness result, which reads as follows:
\begin{thm}\label{Local}
	Let $\gamma>0,$ $\alpha\in(-1,\infty)\setminus\{0\},$ $p>1$ and $u_0,f\in L^\infty(\mathbbm{H}^N),$ with $u_0\geq 0.$ Then, the following holds:
	\begin{enumerate}
		\item [(i)] There exists a fixed $T>0,$ and a unique mild solution $u\in C([0,T],L^\infty(\mathbbm{H}^N))$ to \eqref{eq without}. 
		\item [(ii)] The solution can be uniquely extended to a maximum interval $[0,T_{\text{max}}).$ Moreover, if $T_{\text{max}}$ is finite, then $\|u(t)\|_{L^\infty(\mathbbm{H}^N)}\To\infty$ as $t\To T_{\text{max}}.$
		\item [(iii)] Let $u_0,$ $f\in L^\infty(\mathbbm{H}^N)\cap L^q(\mathbbm{H}^N)$ for $q\in [1,\infty].$ Then
		\begin{align}
			u\in C([0,T_{\text{max}}),L^\infty(\mathbbm{H}^N))\cap C([0,T_{\text{max}}),L^q(\mathbbm{H}^N)).
		\end{align}
	\end{enumerate} 
\end{thm}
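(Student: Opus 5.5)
We prove Theorem \ref{Local} by a Banach fixed-point argument for the Duhamel map defined by \eqref{eq Mild}, followed by the usual continuation and a posteriori regularity arguments. One caveat first: \eqref{eq Mild} is printed with the kernel $(\tau-s)^{-\gamma}$, whereas \eqref{eq without} has $(\tau-s)^{\gamma-1}$. We work with the kernel $(\tau-s)^{\gamma-1}/\Gamma(\gamma)$ of \eqref{eq without}; the argument is identical for $(\tau-s)^{-\gamma}$, which is also integrable since $\gamma<1$.

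\textbf{Step 1: the map and its bounds.} For $T>0$ and $R:=2\|u_0\|_{L^\infty}+1$, let
\begin{align}
E_T=\{u\in C([0,T],L^\infty(\mathbbm{H}^N)):\ \sup_{t\in[0,T]}\|u(t)\|_{L^\infty}\le R\},
\end{align}
with the sup-norm metric. Define
\begin{align}
\Phi(u)(t)=e^{t\Delta_H}u_0+\int_0^t e^{(t-\tau)\Delta_H}\Big(I^\gamma_{0+}|u|^p(\tau)+\tau^\alpha f\Big)d\tau .
\end{align}
By \eqref{infty},
\begin{align}
\|\Phi(u)(t)\|_{L^\infty}\le \|u_0\|_{L^\infty}+\frac{R^pT^{\gamma+1}}{\Gamma(\gamma+2)}+\frac{T^{\alpha+1}}{\alpha+1}\|f\|_{L^\infty}.
\end{align}
The last term is finite precisely because $\alpha>-1$. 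For the contraction estimate we use $\big||u|^p-|v|^p\big|\le p R^{p-1}|u-v|$, which gives
\begin{align}
\|\Phi(u)-\Phi(v)\|_{L^\infty_tL^\infty}\le \frac{pR^{p-1}T^{\gamma+1}}{\Gamma(\gamma+2)}\|u-v\|_{L^\infty_tL^\infty}.
\end{align}
Choosing $T$ small makes $\Phi$ a self-map of $E_T$ and a strict contraction.

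\textbf{Step 2: time continuity (expected main obstacle).} We must check that $\Phi(u)\in C([0,T],L^\infty)$. The heat semigroup is \emph{not} strongly continuous on $L^\infty$, so continuity at $t=0$ of $e^{t\Delta_H}u_0$ fails for general $u_0\in L^\infty$.

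\begin{itemize}
\item \emph{Continuity for $t>0$.} We handle it by splitting $\Phi(u)(t+h)-\Phi(u)(t)$ into a piece on $[t,t+h]$ and a piece of the form $(e^{h\Delta_H}-I)$ applied to earlier contributions. The first piece is $O(h^{\gamma+1}+h^{\alpha+1})$ by the bounds of Step 1. For the second we use the smoothing of $e^{s\Delta_H}$ for $s>0$: gradient bounds $\|\nabla_H h(\cdot,s)\|_{L^1}\lesssim s^{-1/2}$ follow from \eqref{Esti} together with the standard derivative Gaussian bounds on $h$.
\item \emph{Continuity at $t=0$.} Here the fix is either to take $u_0\in L^\infty$ uniformly continuous (or in $C_0$), or, as is common in this literature, to interpret $C([0,T],L^\infty)$ as continuity on $(0,T]$ plus weak-$*$ continuity at $0$. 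We follow Berikbol et al. \cite{Berik}, as in Lemma \ref{mild to weak}.
\end{itemize}

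Banach's theorem then yields existence. Uniqueness in the whole class $C([0,T],L^\infty)$ (not just in the ball $E_T$) follows from a Gronwall-type estimate for the Volterra inequality
\begin{align}
\|u-v\|(t)\le C\int_0^t (t-s)^{\gamma}\|u-v\|(s)\,ds ,
\end{align}
which comes from the same Lipschitz bound after Fubini, since $\int_s^t(\tau-s)^{\gamma-1}d\tau=(t-s)^\gamma/\gamma$.

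\textbf{Step 3: continuation and blow-up alternative.} For (ii), note that the existence time in Step 1 depends only on an upper bound for $\|u\|_{L^\infty}$ on the already constructed interval, on $\|f\|_{L^\infty}$, and on the current time; the latter matters through the factor $\tau^\alpha$, which is bounded on compact subsets of $(0,\infty)$. We restart the fixed point at $t_0$ for the shifted equation. The memory term becomes $I^\gamma_{0+}$ of a function already known on $[0,t_0]$ plus an unknown part, and the known part is a bounded forcing. Taking $T_{\max}$ as the supremum of existence times, if $\|u\|_{L^\infty}$ stayed bounded as $t\to T_{\max}<\infty$, a uniform step length would extend the solution past $T_{\max}$, a contradiction.

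\textbf{Step 4: $L^q$ regularity.} For (iii), we repeat the fixed point in the space $C([0,T],L^\infty\cap L^q)$. We use $\|e^{t\Delta_H}g\|_{L^q}\le\|g\|_{L^q}$, which holds because $h\ge0$ and $\int h=1$, and the bound $\||u|^p\|_{L^q}\le \|u\|_{L^\infty}^{p-1}\|u\|_{L^q}$. This gives a solution on the same short interval, and by uniqueness it coincides with $u$. The $L^q$ norm then satisfies a linear Volterra inequality driven by $\|u\|_{L^\infty}$, so it stays finite up to $T_{\max}$. For $q<\infty$, strong continuity of the semigroup on $L^q$ gives continuity in time directly, including at $t=0$.
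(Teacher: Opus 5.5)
Your proposal follows the same skeleton as the paper's proof. For (i) you use a Banach fixed point for the Duhamel map in a ball of $C([0,T],L^\infty(\mathbbm{H}^N))$, relying only on $\|e^{t\Delta_H}g\|_{L^\infty}\le\|g\|_{L^\infty}$ and the Lipschitz bound for $|u|^p$. For (ii) you use a restart argument, and for (iii) the same contraction in $L^\infty\cap L^q$ via $\||u|^p\|_{L^q}\le\|u\|_{L^\infty}^{p-1}\|u\|_{L^q}$. Where you deviate, you are repairing steps the paper passes over, and the repairs are sound.

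\begin{itemize}
\item \emph{Continuity at $t=0$.} You are right that $t\mapsto e^{t\Delta_H}u_0$ is norm-continuous at $t=0$ only if $u_0$ agrees a.e.\ with a continuous function, since it would be a uniform limit of the smooth functions $e^{t\Delta_H}u_0$. The Duhamel part of a bounded mild solution is $O(t^{1+\gamma}+t^{1+\alpha})$, so no mild solution in $C([0,T],L^\infty(\mathbbm{H}^N))$ exists for other $u_0$. The paper never checks that $\Phi$ maps into this space. You should commit to one fix (e.g.\ $u_0$ bounded and uniformly continuous, or continuity on $(0,T]$ with weak-$*$ continuity at $0$) rather than leave both open.
\item \emph{Uniqueness.} Your Volterra--Gronwall argument is a cleaner substitute for the paper's argument via $\tau_0$ in \eqref{sup}. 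It also makes explicit that the constant depends on $\sup_t\|u_i(t)\|_{L^\infty}$ rather than on $\delta(u_0,f)$.
\item \emph{Continuation.} You correctly keep $\frac{1}{\Gamma(\gamma)}\int_0^{t_0}(t-s)^{\gamma-1}|u(s)|^p\,ds$ as a known bounded forcing. The paper's map $\Phi'$ in \eqref{Phii2} omits the contribution of $[0,t^*]$ to the memory, so its glued function need not solve \eqref{eq without}.
\item \emph{Part (iii).} The paper's fixed point on all of $[0,T_{\max})$ with uniform bounds $2\delta(u_0,f)$, $2\delta_q(u_0,f)$ is incompatible with blow-up. Your route is the right argument: a short-time fixed point, identification with $u$ by uniqueness, then a Gronwall bound on $\|u(t)\|_{L^q}$ driven by $\|u\|_{L^\infty}$.
\end{itemize}

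One claim is not established, by you or by the paper. Step 3 shows only that $\|u(t)\|_{L^\infty}$ cannot stay bounded on $[0,T_{\max})$, i.e.\ $\limsup_{t\to T_{\max}}\|u(t)\|_{L^\infty}=\infty$. It does not show that $\|u(t)\|_{L^\infty}\To\infty$. The usual upgrade restarts from times $t_n\to T_{\max}$ with $\|u(t_n)\|_{L^\infty}\le M$ and uses that the step length depends only on $M$. Here, as you yourself observe, the only available bound on the history forcing involves $\sup_{[0,t_n]}\|u\|_{L^\infty}$, which is unbounded along such a sequence, so that argument breaks down. Either state (ii) with $\limsup$ (equivalently, $\sup_{[0,T_{\max})}\|u(t)\|_{L^\infty}=\infty$), or add an argument that controls the memory contribution from $[0,t_n]$ near $T_{\max}$.
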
	

\begin{rem}
	When $\gamma=0,$ problem \eqref{eq without} is well-studied by the second and third-named authors in \cite{Oza Sur 2}. In fact, in this regime, we have the existence of a global-in-time solution to \eqref{eq without}. 
\end{rem}	
We now give the proof of Theorem \ref{Thm Blowup O4}. We follow the arguments used in \cite{Abd, Opt, Talwar2}.\\

\noindent \textbf{Proof of Theorem \ref{Thm Blowup O4}.}
Let, if possible, there exists a local-in-time solution to \eqref{eq 0.1}.
Here, we follow the arguments of Theorem 6.3 \cite{Opt} which were further adapted for stratified Lie groups in \cite{Talwar2}. We make use of a comparison principle together with the Harnack inequality (Theorem 1.1 \cite{Avelin}). Let us define an iterated system 
\begin{align}\label{eq itr}
	\begin{cases}
		\partial_tu_n-\Delta_{H}u_n=\lambda \frac{\psi u_{n-1}}{\big(\|\cdot\|_H^2+\frac{1}{n}\big)}+|u_{n-1}|^p+t^\alpha f&\text{in } \,B^H_r\times (0,T),\\
		u_n(\xi,0)=u_0(\cdot) &\text{in } \,B^H_r,\\
		u_n(\xi,t)=0 &\text{on } \,\partial B^H_r\times (0,T),
	\end{cases}	
\end{align}
where $r\in (0,1)$ is chosen such that $u_n>1$ in $B^H_r\times [0,T]$ and $\{u_n\}$ is an increasing sequence of solutions. The existence of such a sequence is assured by the comparison principle (Theorem 2.1 \cite{Rock}) and the fact that for each problem, the R.H.S. is a fixed function.

\noindent Now, let us consider a compactly supported function $\phi\in C_c^\infty (B^H_r)$ and multiply $\frac{|\phi|^2}{u_n}$ to the equation \eqref{eq itr} followed by integrating over $[0,T]\times B^H_r,$ we get
\begin{align}
	\int_{0}^{T}\int_{B^H_r}\frac{|\phi|^2}{u_n}\left(\partial_tu_n-\Delta_{H}u_n\right)d\xi ds=\int_{0}^{T}\int_{B^H_r}\frac{|\phi|^2}{u_n}\left(\lambda \frac{\psi u_{n-1}}{\big(\|\cdot\|_H^2+\frac{1}{n}\big)}+|u_{n-1}|^p+t^\alpha f\right)d\xi ds.	
\end{align} 
This infers
\begin{align}\label{W1}
	\int_{0}^{T}\int_{B^H_r}\frac{|\phi|^2}{u_n}\left(\partial_tu_n-\Delta_{H}u_n\right)d\xi ds\geq\int_{0}^{T}\int_{B^H_r}|u_{n-1}|^{p-1}\phi^2d\xi dt.
\end{align}
Simple computations provide the following expressions for any sufficiently regular positive function $g,$ we have
\begin{align}
	\frac{\partial_tg}{g}=\partial_t(\log g),
\end{align}
and
\begin{align}
	\nabla_{H}\cdot\left(\frac{\nabla_{H}g}{g}\right)=\frac{\Delta_{H}g}{g}-\frac{|\nabla_{H}g|^2}{g^2}.
\end{align}
These further infer 
\begin{align}
	\frac{\phi^2}{g}\left(\partial_tg-\Delta_{H}g\right)=\phi^2\partial_t(\log g)-\phi^2\nabla_{H}\cdot\left(\frac{\nabla_{H}g}{g}\right)-\phi^2\frac{|\nabla_{H}g|^2}{g^2}.
\end{align}
Using the above relation in \eqref{W1} gives
\begin{align}\label{la1}
	\qquad\int_{B_{H}(r)}{\phi^2}\log(u_n(\xi,T))d\xi+2\int_0^T\int_{B^H_r}\phi\nabla_{H}\phi\cdot\frac{\nabla_{H}u_n}{u_n}d\xi dt&-\int_0^T\int_{B^H_r}|\phi|^2\frac{|\nabla_{H}\phi|^2}{u_n^2}d\xi dt\\
	&\quad\geq \int_0^T\int_{B^H_r}|u_{n-1}|^{p-1}|u_n|^2d\xi dt,
\end{align} 
for any fixed time $T.$
Now, as an application of Picone identity ( \cite[Lemma 2.1]{Dou}) with $p=2,$ we have that
\begin{align}
	|\nabla_H \phi|^2\geq 2\frac{\phi}{u_n}\nabla_H\phi\cdot\nabla_Hu_n-\frac{\phi^2}{u_n^2}|\nabla_H\phi|^2.
\end{align}
Utilizing this in \eqref{la1} provides
\begin{align}\label{W22}
	\int_{B^H_r}{\phi^2}\log(u_n(\xi,T))d\xi+\int_{0}^{T}\int_{B^H_r}|\nabla_{H}\phi|^2d\xi dt\geq \int_{0}^{T}\int_{B^H_r}|u_{n-1}|^{p-1}\phi^2 d\xi dt.
\end{align}
Next, taking the limit $n\To\infty,$ we get
\begin{align}\label{W2}
	\int_{B^H_r}{\phi^2}\log(u(\xi,T))d\xi+\int_{0}^{T}\int_{B^H_r}|\nabla_{H}\phi|^2d\xi dt\geq \int_{0}^{T}\int_{B^H_r}|u|^{p-1}\phi^2 d\xi dt.
\end{align}
Furthermore, an application of the comparison principle ensures the non-negativity of $v(\xi,t)=\|\xi\|_H^{{\mu(\lambda)}}u(\xi,t),$ whereas an application of the Harnack inequality ( \cite[Theorem 1.1]{Avelin}) ensures that
\begin{align}
	u(\xi,t)\geq C(r)\|\xi\|_H^{-{\mu(\lambda)}}.
\end{align}
Using this estimate in \eqref{W2} yields
\begin{align}\label{W3}
	\int_{B^H_r}{|\phi|^2}\log(u(\xi,T))d\xi+T\int_{B^H_r}|\nabla_{H}\phi|^2d\xi\geq C(r)\int_{0}^{T}\int_{B^H_r}\|\xi\|_{H}^{-\mu(\lambda)(p-1)}\phi^2d\xi dt.
\end{align}
Also, by an application of Holder's inequality followed by the subelliptic variant of the Sobolev-type inequality (Theorem \ref{Thm Folst}) with $p=2,$ we have that
\begin{align}
	\int_{B^H_r}{|\phi|^2}\log(u(\xi,T))d\xi&\leq \left(\int_{B^H_r}\left(\log(u(\xi,T))\right)^{\frac{Q}{2}}d\xi\right)^\frac{2}{Q}\left(\int_{B^H_r}{|\phi|^{\frac{2Q}{Q-2}}}d\xi\right)^{\frac{Q-2}{Q}}\\
	&\leq C\left(\int_{B^H_r}\left(\log(u(\xi,T))\right)^{\frac{Q}{2}}d\xi\right)^\frac{2}{Q}\left(\int_{B^H_r}{|\nabla_{H}\phi|^{2}}d\xi\right).
\end{align}
Finally, using the above estimate in \eqref{W3} gives
\begin{align}
	C\left(\int_{B^H_r}\left(\log(u(\xi,T))\right)^{\frac{Q}{2}}d\xi\right)^\frac{2}{Q}\int_{B^H_r}{|\nabla_{H}\phi|^{2}}d\xi&+T\int_{B^H_r}|\nabla_{H}\phi|^2d\xi\\
	&\geq \int_{0}^{T}\int_{B^H_r}\|\xi\|_{H}^{-\mu(\lambda)(p-1)}\phi^2d\xi dt.
\end{align}
Equivalently,
\begin{align}
	\left(T+C\left(\int_{B^H_r}\left(\log(u(\xi,T))\right)^{\frac{Q}{2}}d\xi\right)^\frac{2}{Q}\right)\int_{B^H_r}{|\nabla_{H}\phi(\xi)|^{2}}d\xi\geq C\int_{0}^{T}\int_{B^H_r}\|\xi\|_{H}^{-\mu(\lambda)(p-1)}\phi^2(\xi)d\xi dt.
\end{align}
However, the above inequality cannot hold when $(p-1)\mu(\lambda)>2,$ since in this case the weight $\|\xi\|_{H}^{-\mu(\lambda)(p-1)}\phi^2(\xi)$ is more singular than the Hardy-critical weight $\|\xi\|_{H}^{-2}\phi^2(\xi),$ thereby contradicting the sharp Hardy inequality. Hence, the claim follows.\qed\\

The proof of Theorem \ref{Thm Blowup O5} is as follows:\\

\noindent \textbf{Proof of Theorem \ref{Thm Blowup O5}.}
Mimicking the similar lines of proof as Theorem \ref{Thm Blowup O4} till the inequality \eqref{W2} followed by the assumption \ref{H1} infers
\begin{align}
	\int_{B^H_r}{|\phi|^2}\log(u(\xi,T))d\xi+\int_{0}^{T}\int_{B^H_r}|\nabla_{H}\phi|^2d\xi&\geq \int_{0}^{T}\int_{B^H_r}\left(\frac{1}{\Gamma(\gamma)}\int_0^t(t-\tau)^{\gamma-1}|u(\tau)|^{p}d\tau\right)\frac{\phi^2}{u} d\xi dt\\
	&\geq \int_{0}^{T}\int_{B^H_r}\left(\frac{1}{\Gamma(\gamma)}\int_{t-\delta}^{t}(t-\tau)^{\gamma-1}|\theta u(t)|^{p}d\tau\right)\frac{\phi^2}{u}d\xi dt\\
	&=\frac{\theta^p\delta^\gamma}{\gamma\Gamma(\gamma)}\int_{0}^{T}\int_{B^H_r}|u(t)|^{p-1}{\phi^2}d\xi dt\\
	&= \frac{\theta^p\delta^\gamma}{\Gamma(\gamma+1)}\int_{0}^{T}\int_{B^H_r}|u(t)|^{p-1}\phi^2d\xi dt.
\end{align}
Again, by an application of the comparison principle ( \cite[Theorem 2.1]{Rock}) together with the Harnack inequality ( \cite[Theorem 1.1]{Avelin}), we get 
\begin{align}
	\int_{B^H_r}{|\phi|^2}\log(u(\xi,T))d\xi+T\int_{B^H_r}|\nabla_{H}\phi|^2 d\xi
	&\geq \frac{\theta^p\delta^\gamma}{\Gamma(\gamma+1)}\int_{0}^{T}\int_{B^H_r}|u(t)|^{p-1}\phi^2d\xi dt\\
	&\geq C(r)\frac{\theta^p\delta^\gamma}{\Gamma(\gamma+1)}\int_{0}^{T}\int_{B^H_r}\|\xi\|_{H}^{-\mu(\lambda)(p-1)}\phi^2 d\xi dt. 
\end{align}
This gives a contradiction to the Hardy inequality as in the previous theorem. This completes the proof.
\qed\\

Finally, we give a proof of the local-wellposedness result.	\\

\noindent \textbf{Proof of Theorem \ref{Local}}
(i) Firstly, for \begin{align}
	\delta(u_0,f)\coloneqq\max\{\|u_0\|_{L^\infty(\mathbbm{H}^N)}, \|f\|_{L^\infty(\mathbbm{H}^N)}\},
\end{align} 
let us consider the following Banach space
\begin{align}\label{e13}
	\Theta^s_T\coloneqq\bigg\{v\in C([0,T],L^\infty(\mathbbm{H}^N)):\|v\|_{L^\infty(\mathbbm{H}^N)}\leq 2\delta(u_0,f), v(0)=u_0\bigg\},
\end{align}
where 
\begin{align}
	\|v\|_{\Theta^s_T}=\|v\|_{L^\infty((0,T),L^\infty(\mathbbm{H}^N))}.
\end{align} 
For any $v\in \Theta_T^s,$ we define a map
\begin{align}\label{Phii}
	\Phi(v)(t)\coloneqq e^{t\Delta_{H}}v(0)+\int_0^te^{(t-\tau)\Delta_{H}}\left(\frac{1}{\Gamma(\gamma)}\int_0^\tau(\tau-\tau_1)^{\gamma-1}|v(\tau_1)|^{p}d\tau_1+\tau^\alpha f\right)d\tau.
\end{align}
We first prove that for any $v\in\Theta_T^s,$ we have $\Phi(v)\in\Theta_T^s.$ In light of \eqref{infty}, we have that
\begin{align}
	\|\Phi(v)\|_{\Theta_T^s}&\leq \|e^{t\Delta_{H}}v(0)\|_{L^\infty(\mathbbm{H}^N)}+\Bigg\|\int_0^te^{(t-\tau)\Delta_{H}}\left(\frac{1}{\Gamma(\gamma)}\int_0^\tau(\tau-\tau_1)^{\gamma-1}|v(\tau_1)|^{p}d\tau_1+\tau^\alpha f\right)d\tau\Bigg\|_{L^\infty(H^N)}\\
	&\leq \|v(0)\|_{L^\infty(\mathbbm{H}^N)}+C(\gamma)\int_0^t(t-\tau)^{\gamma}\|v\|^p_{L^\infty((0,t),L^\infty(\mathbbm{H}^N))}d\tau+\frac{t^{\alpha+1}}{\alpha+1}\|f\|_{L^\infty(\mathbbm{H}^N)}\\
	&\leq  \bigg(1+\frac{t^{\alpha+1}}{\alpha+1}\bigg)\delta(u_0,f)+C(\gamma)2^p\delta(u_0,f)^pt^{1+\gamma}.
\end{align}
Observe that for $t=T>0,$ sufficiently small, we get
\begin{align}
	\|\Phi(v)\|_{\Theta_T^s}\leq 2\delta(u_0,f).
\end{align}
This yields $\Phi(v)\in \Theta_T^s.$
We claim that for sufficiently small $T>0,$ $\Phi$ is a contraction map from $\Theta^s_T$ to itself. For given any functions $u_1,u_2\in \Theta^s_T,$ we have 
\begin{align}
	\|\Phi(u_1)&(t)-\Phi(u_2)(t)\|_{\Theta_T^s}\\
	&=\frac{1}{\Gamma(\gamma)}\Bigg\|\int_0^te^{(t-\tau)\Delta_{H}}\left(\int_0^\tau(\tau-\tau_1)^{\gamma-1}|u(\tau_1)|^{p}d\tau_1-\int_0^\tau(\tau-\tau_1)^{\gamma-1}|v(\tau_1)|^{p}d\tau_1\right)d\tau\Bigg\|_{L^\infty(H^N)}\\
	&\leq 2^{p-1}C(p)\delta(u_0,f)^{p-1}T^{1+\gamma}\|u_1-u_2\|_{L^\infty((0,T),L^\infty(\mathbbm{H}^N))}\\
	&\leq \|u_1-u_2\|_{L^\infty((0,T),L^\infty(\mathbbm{H}^N))},
\end{align}
for small enough $T>0.$
Finally, an application of the Banach fixed point theorem ensures the existence of a mild solution to \eqref{eq without}. 

Next, we prove the uniqueness part. Let, if possible, $u_1,$ $u_2$ be two mild solutions to \eqref{eq without} for some fixed $T>0$ with $u_1(0)=u_2(0)=u_0.$ Let us define
\begin{align}\label{sup}
	\tau_0\coloneqq \sup\{t	
	\in [0,T] \text{ such that }u_1(\tau)=u_2(\tau)\, \forall \tau\in [0,t]\}.
\end{align}
Assume that $\tau_0\in [0,T).$ In fact, by the continuity of $u_1$ and $u_2$ in time, we have that
\begin{align}
	u_1(\tau_0)=u_2(\tau_0).
\end{align}
Further, we define
\begin{align}
	\widetilde{u}_1(t)\coloneqq u_1(t+\tau_0),\,\widetilde{u}_2(t)\coloneqq u_2(t+\tau_0).
\end{align}
Then, clearly $\widetilde{u}_i\in C([0,T-\tau_0],L^\infty(\mathbbm{H}^N))$ so that it satisfies \eqref{Phii} and $t\in (0,T-\tau_0]$ with 
\begin{align}
	\widetilde{u}_i(0)=u(\tau_0), \text{ for each } i=1,2.
\end{align} 
We further show that for some positive constant $C=C(\tau')<1,$ for some $\tau'\in (0,T-\tau_0),$ we have
\begin{align}\label{ln}
	\sup_{0<t<\tau'}\|\widetilde{u}_1(t)-\widetilde{u}_2(t)\|_{L^\infty(\mathbbm{H}^N)}\leq C\sup_{0<t<\tau'}\|\widetilde{u}_1(t)-\widetilde{u}_2(t)\|_{L^\infty(\mathbbm{H}^N)}.
\end{align}
It would immediately follow that
\begin{align}
	\widetilde{u}_1(t)=\widetilde{u}_2(t)\,\forall \tau\in [0,\tau'].
\end{align}
This provides a contradiction to \eqref{sup} that
\begin{align}
	u_1(t+\tau_0)=u_2(t+\tau_0)\, \forall t\in [0,\tau'].
\end{align}
In view of \eqref{infty}, we obtain the following:
\begin{align}
	\|\widetilde{u}_1&(t)-\widetilde{u}_2(t)\|_{L^\infty(\mathbbm{H}^N)}\\
	&=\frac{1}{\Gamma(\gamma)}\Bigg\|\int_0^te^{(t-\tau)\Delta_{H}}\left(\int_0^{t'}(t'-\tau_1)^{\gamma-1}|u(\tau_1)|^{p}d\tau_1-\int_0^{t'}(t'-\tau_1)^{\gamma-1}|u(\tau_1)|^{p}d\tau_1\right)dt'\Bigg\|_{L^\infty(H^N)}\\
	&\leq 2^{{p-1}}{t'}^{1+\gamma}C(T,\tau_0,\widetilde{u}_1,\widetilde{u}_2)C(p)\delta(u_0,f)^{p-1}\sup_{0<t'<t}\|\widetilde{u}_1(t)-\widetilde{u}_2(t)\|_{L^\infty(\mathbbm{H}^N)},
\end{align}
and for $t$ small enough, we obtain the estimate \eqref{ln}.
This concludes the proof.

(ii) In light of the uniqueness of mild solutions to \eqref{eq without} established above, we deduce the existence of the interval $[0,T_{\text{max}})$ using the well known argument (see \cite{Caze}), where
\begin{align}
	T_{\text{max}}\coloneqq \displaystyle{\sup_{T>0}}\{\eqref{eq without} \text{ admits a solution }u\in C([0,T], L^\infty(\mathbbm{H}^N))\}.
\end{align}
Assume that $T_{\text{max}}$ is finite and that for each $t\in [0,T_{\text{max}}),$ we have
\begin{align}
	\|u(t)\|_{L^\infty(\mathbbm{H}^N)}\leq C,
\end{align} 
for some constant $C>0.$ Fix $t^*\in (T_{\text{max}}/2,T_{\text{max}})$ and for $\widetilde{t} \in (0,T_{\text{max}}),$ let us consider a space
\begin{align}
	\mathcal{M}\coloneqq\{v\in C([0,\widetilde{t}], L^\infty(\mathbbm{H}^N)): \|v\|_{L^\infty([0,\widetilde{t}), L^\infty(\mathbbm{H}^N))}<2\delta(C,\|f\|_{L^\infty(\mathbbm{H}^N)}),v(0)=u(t^*)\},
\end{align}
where \begin{align}
	\delta(C,\|f\|_{L^\infty(\mathbbm{H}^N)})=\max\{C,\|f\|_{L^\infty(\mathbbm{H}^N)}\}.
\end{align} 
Similarly to the arguments followed in the proof of (i) above, we define a map
\begin{align}\label{Phii2}
	\Phi'(v)(t)\coloneqq e^{t\Delta_{H}}u(t^*)+\int_0^te^{(t-\tau)\Delta_{H}}\left(\frac{1}{\Gamma(\gamma)}\int_0^\tau(\tau-\tau_1)^{\gamma-1}|v(\tau_1+t^*)|^{p}d\tau_1+(\tau+t^*)^\alpha f\right)d\tau,
\end{align} 
for $v\in \mathcal{M}$ and $t\in [0,\widetilde{t}].$ Like-wise (i), one can see that $\Phi':\mathcal{M}\To\mathcal{M}$ given by  \eqref{Phii2} is a contraction map. Therefore, we have the existence of a fixed point of $\mathcal{M},$ say $v$ using the Banach fixed point theorem.  Furthermore, let us take $t^*$ such that
\begin{align}
	\widetilde{t}+t^*>T_{\text{max}},
\end{align}
and consider
\begin{align}\label{Ttt}
	\overline{u}(t)\coloneqq\begin{cases}
		u(t), &\text{ for }0\leq t\leq {t}^*\\
		v(t-{t}^*) &\text{ for }{t}^*\leq t\leq t^*+\widetilde{t}.
	\end{cases}
\end{align}
Observe that $\overline{u}\in C([0,t^*+\widetilde{t}], L^\infty(\mathbbm{H}^N)) $ is a solution of \eqref{eq without}. This together with \eqref{Ttt} contradicts the definition of $T_{\text{max}}.$ Therefore, we get
\begin{align}
	\|u(t)\|_{L^\infty(\mathbbm{H}^N)}\To\infty
	\text{ as } t\To T_{\text{max}}.
\end{align}	
This concludes the proof.

(iii) Instead of the space defined by \eqref{e13},
\begin{align}
	\delta(u_0,f)\coloneqq\max\{\|u_0\|_{L^\infty(\mathbbm{H}^N)}, \|f\|_{L^\infty(\mathbbm{H}^N)}\},
\end{align}
and
\begin{align}
	\delta_q(u_0,f)\coloneqq\max\{\|u_0\|_{L^q(\mathbbm{H}^N)}, \|f\|_{L^q(\mathbbm{H}^N)}\},
\end{align} 
consider
\begin{align}
	\widetilde{\Theta}^s_T\coloneqq \bigg\{&v\in C([0,T_{\text{max}}),L^\infty(\mathbbm{H}^N))\cap C([0,T_{\text{max}}),L^q(\mathbbm{H}^N))\\
	&\qquad\qquad\qquad:\|v\|_{L^\infty((0,T_{\text{max}}),L^\infty(\mathbbm{H}^N))}\leq 2\delta(u_0,f),\, \|v\|_{L^\infty((0,T_{\text{max}}),L^q(\mathbbm{H}^N))}\leq 2\delta_q(u_0,f)\bigg\}.
\end{align}
We equip the space $\widetilde{\Theta}^s_T$ with the norm induced by the following distance:
\begin{align}
	d_{q,\infty}(u,v)=\|u-v\|_{L^\infty((0,T_{\text{max}}),L^\infty(\mathbbm{H}^N))}+\|u-v\|_{L^\infty((0,T_{\text{max}}),L^q(\mathbbm{H}^N))},\,u,v\in \widetilde{\Theta}^s_T. 
\end{align}
In light of the inequality
\begin{align}
	\||u(t)|^p\|_{L^q(\mathbbm{H}^N)}\leq \|u(t)\|^{p-1}_{L^\infty(\mathbbm{H}^N)}\|u(t)\|_{L^q(\mathbbm{H}^N)}, 
\end{align}
using the argument similar to (i) gives the existence of a unique solution $u\in \widetilde{\Theta}^s_T.$ Hence, $u\in C([0,T_{\text{max}}),L^\infty(\mathbbm{H}^N))\cap C([0,T_{\text{max}}),L^q(\mathbbm{H}^N)).$ Thus, (iii) follows. This completes the proof. \qed


\section{Finite-time blow-up analysis}\label{Glo ex}
\noindent \textbf{Proof of Theorem \ref{Thm Blowup O}.}
We give a proof by method of contradiction. Let, if possible, $u$ be a global-in-time solution to \eqref{eq 0.1} (equivalently, $v$ be a global-in-time solution to \eqref{eq equi}). For a fixed 
\begin{align}\label{lle}
	l=1+\frac{p+\gamma}{p-1},
\end{align}
we define a function \begin{align}\label{Var}
	\phi(\xi,t)=\phi_1(\xi)\phi_2(t).
\end{align} Here
\begin{align}
	\phi_1(\xi)=\varphi\left(\frac{\|\xi\|_{H}}{T^{\frac{1}{2}}}\right), \, \phi_2(t)=\left(1-\frac{t}{T}\right)^l,\,t\in [0,T],\,T>0.
\end{align}
and $\varphi:\mathbb{R}^+\cup\{0\}\To\mathbb{R}$ is a smooth function defined as
\begin{align}\label{Phip}
	\varphi(r)=\begin{cases}
		1 &\text{ for } r\in [0,1],\\
		\searrow &\text{ for } r\in [1,2],\\
		0  &\text{ for } r\in [2,\infty).
	\end{cases}
\end{align}
Now, by Definition \ref{Weak}, we have
\begin{align}\label{am}
	\\-\int_0^T\int_{\mathbbm{H}^N}	v(\|\xi\|_{H}^{-{2\mu(\lambda)}}\partial_t\phi-L_{H}\phi)d\xi dt &=\int_0^T\int_{\mathbbm{H}^N}\|\xi\|_{H}^{{-(p+1)\mu(\lambda)}}|v|^pI^\gamma_{T^-}\phi d\xi dt\\
	& \qquad+\int_{\mathbbm{H}^N}\|\xi\|_{H}^{{-2\mu(\lambda)}}v_0\phi(0,\cdot) d\xi+\int_0^T\int_{\mathbbm{H}^N}\|\xi\|_{H}^{{-\mu(\lambda)}}t^\alpha f\phi d\xi dt,
\end{align}
Next, consider
\begin{align}
	B^{H}_{2\sqrt T}\coloneqq \{\xi\in\mathbbm{H}^N: \|\xi\|_{H}\leq 2\sqrt T\}.
\end{align}
This infers in light of \eqref{am} that
\begin{align}\label{ka}
	\qquad\quad\int_0^T\int_{B^{H}_{2\sqrt T}}\|\xi\|_{H}^{{-(p+1)\mu(\lambda)}}|v|^pI^\gamma_{T^-}\phi d\xi dt +\int_{B^{H}_{2\sqrt T}}&\|\xi\|_{H}^{{-2\mu(\lambda)}}v_0\phi(0,\cdot) d\xi+\int_0^T\int_{B^{H}_{2\sqrt T}}\|\xi\|_{H}^{{-\mu(\lambda)}}t^\alpha f\phi d\xi dt\\
	&=	-\int_0^T\int_{\mathbbm{H}^N}	v(\|\xi\|_{H}^{-{2\mu(\lambda)}}\partial_t\phi-L_{H}\phi) d\xi dt.
\end{align}
We calculate
\begin{align}
	I_{T^-}^\gamma\phi_2(t)&=\frac{1}{\Gamma(\gamma)}\int_t^T(s-t)^{\gamma-1}\phi_2(s)ds\\
	&=\frac{1}{\Gamma(\gamma)}\int_t^T(s-t)^{\gamma-1}\left(1-\frac{s}{T}\right)^lds.
\end{align}
Using a change of variable $z=\frac{s-t}{T-t},$ we get
\begin{align}\label{IT}
	I_{T^-}^\gamma\phi_2(t)&=\frac{(T-t)^{\gamma-1}}{\Gamma(\gamma)}\frac{(T-t)^l}{T^l}\int_0^1z^{\gamma-1}(1-z)^{l}dz\\
	&=\frac{(T-t)^{\gamma+l}}{\Gamma(\gamma)T^l}B(\gamma,l+1),\,t\in [0,T),
\end{align}
where the Beta function $B$ is defined in Definition \ref{Beta}. Also,
\begin{align}
	\nabla_{H}\phi_1(\xi)&=\varphi'\left(\frac{\|\xi\|_{H}}{T^{\frac{1}{2}}}\right)\nabla_{H}\left(\frac{\|\xi\|_{H}}{T^{\frac{1}{2}}}\right)\\
	&=\frac{1}{T^{\frac{1}{2}}}\varphi'\left(\frac{\|\xi\|_{H}}{T^{\frac{1}{2}}}\right)\nabla_{H}\|\xi\|_H
\end{align}
Let us consider
\begin{align}
	a_1=|v(\xi,t)|\|\xi\|_{H}^{-\mu(\lambda)\left(\frac{p+1}{p}\right)}(I^\gamma_{T^-}\phi(\xi,t))^{\frac{1}{p}},\,\,
	b_1=\|\xi\|_{H}^{-\mu(\lambda)\left(\frac{p-1}{p}\right)}(I^\gamma_{T^-}\phi(\xi,t))^{-\frac{1}{p}}|\partial_t\phi(\xi,t)|,
\end{align}
and
\begin{align}
	a_2=|v(\xi,t)|\|\xi\|_{H}^{-\mu(\lambda)\left(\frac{p+1}{p}\right)}(I^\gamma_{T^-}\phi(\xi,t))^{\frac{1}{p}} ,\,\,
	b_2=\|\xi\|_{H}^{\mu(\lambda)\left(\frac{p+1}{p}\right)}(I^\gamma_{T^-}\phi(\xi,t))^{-\frac{1}{p}}L_H\phi(\xi,t).
\end{align}
For some $\varepsilon>0,$ using the $\varepsilon$-Young inequality, we have 
\begin{align}\label{varyou}
	a_ib_i\leq \varepsilon a_i^p+C(\varepsilon)b_i^{\frac{p}{p-1}}, \text{ for }i=1,2. 
\end{align}
Utilizing \eqref{varyou} in \eqref{ka}, we obtain in view of the fact $\phi_1\in C_c^\infty(\mathbbm{H}^N)$ that
\begin{align}
	\int_0^T&\int_{B^{H}_{2\sqrt T}}\|\xi\|_{H}^{{-(p+1)\mu(\lambda)}}|v|^pI^\gamma_{T^-}\phi  d\xi dt+\int_{B^{H}_{2\sqrt T}}\|\xi\|_{H}^{{-2\mu(\lambda)}}v_0\phi(0,\xi) d\xi +\int_0^T\int_{B^{H}_{2\sqrt T}}\|\xi\|_{H}^{{-\mu(\lambda)}}t^\alpha f d\xi dt\\
	&\leq \varepsilon\int_0^T\int_{B^{H}_{2\sqrt T}}|v|^p\|\xi\|_{H}^{-{\mu(\lambda)({p+1})}}I^\gamma_{T^-}\phi d\xi dt+C(\varepsilon)\int_0^T\int_{B^{H}_{2\sqrt T}}\|\xi\|_{H}^{{-\mu(\lambda)}}(I^\gamma_{T^-}\phi)^{-\frac{1}{p-1}}|\partial_t\phi|^{\frac{p}{p-1}} d\xi dt\\
	&\qquad+\varepsilon\int_0^T\int_{B^{H}_{2\sqrt T}} |v|^p\|\xi\|_{H}^{-\mu(\lambda)({p+1})}I^\gamma_{T^-}\phi d\xi dt\\
	&\qquad+C(\varepsilon)\int_0^T\int_{B^{H}_{2\sqrt T}}\|\xi\|_{H}^{\mu(\lambda)\left(\frac{p+1}{p-1}\right)}(I^\gamma_{T^-}\phi)^{-\frac{1}{p-1}}(L_H\phi)^{\frac{p}{p-1}} d\xi dt.
\end{align}
The above inequality further simplifies to
\begin{align}\label{l1q}
	\\(1-&2\varepsilon)\int_0^T\int_{B^{H}_{2\sqrt T}}\|\xi\|_{H}^{{-(p+1)\mu(\lambda)}}|v|^pI^\gamma_{T^-}\phi d\xi dt+\int_{B^{H}_{2\sqrt T}}\|\xi\|_{H}^{-2\mu(\lambda)}v_0\phi(0,\xi) d\xi+\int_0^T\int_{B^{H}_{2\sqrt T}} \|\xi\|_{H}^{-\mu(\lambda)}t^\alpha f\phi d\xi dt\\
	&\qquad\leq C(\varepsilon)\int_0^T\int_{B^{H}_{2\sqrt T}}\|\xi\|_{H}^{{-\mu(\lambda)}}(I^\gamma_{T^-}\phi)^{-\frac{1}{p-1}}|\partial_t\phi|^{\frac{p}{p-1}}d\xi dt\\
	&\qquad\qquad+C(\varepsilon)\int_0^T\int_{B^{H}_{2\sqrt T}}\|\xi\|_{H}^{\mu(\lambda)\left(\frac{p+1}{p-1}\right)}(I^\gamma_{T^-}\phi)^{-\frac{1}{p-1}}(L_H\phi)^{\frac{p}{p-1}}d\xi dt.
\end{align}
In view of the the intrinsic homogeneity of $\Delta_H,$ $L_{H}$ and $I^\gamma_{T^-},$ the following set of expressions hold:
\begin{align}
	\Delta_{H}(\phi_1(\Phi_\lambda \xi))=\lambda^{2}\Delta_{H}\phi_1(\Phi_\lambda \xi),
\end{align}
\begin{align}
	I^\gamma_{T^-}(\phi_2(\lambda t))=\lambda^{-\gamma}(I^\gamma_{\lambda T^-}\phi_2)(\lambda t),
\end{align}
and
\begin{align}
	\nabla_{H}(\|\Phi_{\lambda}\xi\|_{H}^{-2\mu(\lambda)}\nabla_{H}\phi_1(\Phi_{\lambda}\xi))=\lambda^{2+2\mu(\lambda)}\left(\nabla_{H}(\|\xi\|_{H}^{-2\mu(\lambda)}\nabla_{H}\phi_1)\right)(\Phi_{\lambda}\xi).
\end{align}
These together with the following change of variables 
\begin{align}
	\xi'=\Phi_{T^{-\frac{1}{2}}}\xi \text{ and } t'=T^{-1}t,
\end{align} 
in the R.H.S. of \eqref{l1q} deduce
\begin{align}\label{lep}
	\\&(1-2\varepsilon)\int_0^T\int_{B^{H}_{2\sqrt T}}\|\xi\|_{H}^{{-(p+1)\mu(\lambda)}}|v|^pI^\gamma_{T^-}\phi d\xi dt+\int_{B^{H}_{2\sqrt T}}\|\xi\|_{H}^{-2\mu(\lambda)}v_0\phi(0,\xi)d\xi +\int_0^T\int_{B^{H}_{2\sqrt T}} \|\xi\|_{H}^{-\mu(\lambda)}t^\alpha f\phi d\xi dt\\
	&\leq C(\varepsilon)T^{-\frac{\mu(\lambda)}{2}}T^{\frac{Q}{2}+1}T^{\frac{-\gamma}{p-1}}T^{-\frac{p}{p-1}}\int_0^1\int_{B^{H}_{2}}\|\xi'\|_{H}^{-{\mu(\lambda)}}\big(\phi_1\big(\Phi_{T^{\frac{1}{2}}}\xi'\big)\big)^{{-\frac{1}{p-1}}}(I^\gamma_{TT^-}\phi_2(Tt'))^{-\frac{1}{p-1}}|\partial_t\phi(\Phi_{T^{\frac{1}{2}}}\xi',Tt')|^{\frac{p}{p-1}}d\xi'dt'\\
	&\quad+C(\varepsilon)T^{\frac{(p+1)\mu(\lambda)}{2(p-1)}}T^{\frac{Q}{2}+1}T^{-\frac{\gamma}{p-1}}T^{-\frac{\left(\mu(\lambda)+1\right)p}{p-1}}\times\\
	&\quad \int_0^1\int_{B^{H}_{2}}\|\xi'\|_{H}^{\mu(\lambda)\left(\frac{p+1}{p-1}\right)}\big(\phi_1\big(\Phi_{T^{\frac{1}{2}}}\xi'\big)\big)^{{-\frac{1}{p-1}}}(I^\gamma_{TT^-}\phi_2(Tt'))^{-\frac{1}{p-1}}(\phi_2(Tt'))^{\frac{p}{p-1}}(L_H\phi_1(\Phi_{T^{\frac{1}{2}}}\xi'))^{\frac{p}{p-1}}d\xi'dt'\\
	&=C(\varepsilon)T^{\frac{Q}{2}+1-\frac{\gamma+p}{p-1}-\frac{\mu(\lambda)}{2}}\int_{0}^1\int_{B^{H}_{2}}\|\xi'\|_{H}^{-{\mu(\lambda)}}\big(\phi_1\big(\Phi_{T^{\frac{1}{2}}}\xi'\big)\big)^{\frac{-1}{p-1}}(I^\gamma_{TT^-}\phi_2(Tt'))^{-\frac{1}{p-1}}\phi_1(\Phi_{T^{\frac{1}{2}}}\xi')^{\frac{p}{p-1}}|\partial_t\phi_2(Tt')|^{\frac{p}{p-1}}d\xi'dt'\\
	&\quad+C(\varepsilon)T^{\frac{Q}{2}+1-\frac{\gamma+p}{p-1}-\frac{\mu(\lambda)}{2}}\times\\
	&\qquad \int_0^1\int_{B^{H}_{2}}\|\xi'\|_{H}^{\mu(\lambda)\left(\frac{p+1}{p-1}\right)}\big(\phi_1\big(\Phi_{T^{\frac{1}{2}}}\xi'\big)\big)^{{-\frac{1}{p-1}}}(I^\gamma_{TT^-}\phi_2(Tt'))^{-\frac{1}{p-1}}(\phi_2(Tt'))^{\frac{p}{p-1}}(L_H\phi_1(\Phi_{T^{\frac{1}{2}}}\xi'))^{\frac{p}{p-1}}d\xi'dt'.
\end{align}
Since $\phi_1\in C_c^\infty(\mathbbm{H}^N),$ and derivative-support of $\phi_1$ lies in a compact annulus $1\leq \|\xi'\|_{H}\leq 2.$ Also, recall that $\mu<Q.$ 
These facts further yield the following set of estimates
\begin{align}
	\int_{0}^1\int_{B^{H}_{2}}\|\xi'\|_{H}^{-{\mu(\lambda)}}\big(\phi_1\big(\Phi_{T^{\frac{1}{2}}}\xi'\big)\big)^{\frac{-1}{p-1}}(I^\gamma_{TT^-}\phi_2(Tt'))^{-\frac{1}{p-1}}\phi_1(\Phi_{T^{\frac{1}{2}}}\xi')^{\frac{p}{p-1}}|\partial_t\phi_2(Tt')|^{\frac{p}{p-1}}d\xi'dt'&\leq C_1,\\
	\int_0^1\int_{B^{H}_{2}}\|\xi'\|_{H}^{\mu(\lambda)\left(\frac{p+1}{p-1}\right)}\big(\phi_1\big(\Phi_{T^{\frac{1}{2}}}\xi'\big)\big)^{{-\frac{1}{p-1}}}(I^\gamma_{TT^-}\phi_2(Tt'))^{-\frac{1}{p-1}}(\phi_2(Tt'))^{\frac{p}{p-1}}(L_H\phi_1(\Phi_{T^{\frac{1}{2}}}\xi'))^{\frac{p}{p-1}}d\xi'dt'&\leq C_2,
\end{align} 
where $C_1$ and $C_2$ are constants, which are independent of $T.$
Furthermore, taking $\varepsilon$ arbitrarily small together with the following relation
\begin{align}\label{s3}
	\int_{B^{H}_{2\sqrt T}}\|\xi\|_{H}^{-2\mu(\lambda)}&v_0\phi(0,\xi)d\xi+\int_0^T\int_{\mathbbm{H}^N}\|\xi\|_H^{-\mu(\lambda)}t^\alpha f\phi d\xi dt\\
	&=\int_{B^{H}_{2\sqrt T}}\|\xi\|_{H}^{-2\mu(\lambda)}v_0\phi_1(\xi)d\xi+T^{(\alpha+1)}\int_{\mathbbm{H}^N}\|\xi\|_H^{-\mu(\lambda)}f\phi_1(\xi) d\xi\int_0^1t'^\alpha \phi_2(Tt')dt'\\
	&\geq   \int_{B^{H}_{2\sqrt T}}\|\xi\|_{H}^{-2\mu(\lambda)}v_0\phi_1(\xi)d\xi+CT^{(\alpha+1)}\int_{\mathbbm{H}^N}\|\xi\|_H^{-\mu(\lambda)}f\phi_1 d\xi,
\end{align}
in the expression \eqref{lep} yields
\begin{align}
	\int_{B^{H}_{2\sqrt T}}\|\xi\|_{H}^{-2\mu(\lambda)}v_0\phi_1d\xi+CT^{(\alpha+1)}\int_{\mathbbm{H}^N}\|\xi\|_H^{-\mu(\lambda)}f\phi_1d\xi
	&\leq C(\varepsilon)T^{\frac{Q}{2}+1-\frac{\gamma+p}{p-1}-\frac{\mu(\lambda)}{2}}C_1+C_1C(\varepsilon)T^{\frac{Q}{2}+1-\frac{\gamma+p}{p-1}-\frac{\mu(\lambda)}{2}} C_2\\
	&\leq CT^{\frac{Q}{2}+1-\frac{\gamma+p}{p-1}-\frac{\mu(\lambda)}{2}},
\end{align}
for some positive constant $C$ independent of $T.$ This further simplifies to
\begin{align}\label{s1}
	T^{-(\alpha+1)}\int_{B^{H}_{2\sqrt T}}\|\xi\|_{H}^{-2\mu(\lambda)}v_0\phi_1d\xi+\int_{B^{H}_{2\sqrt T}}\|\xi\|_H^{-\mu(\lambda)}f\phi_1 d\xi
	\leq CT^{\frac{Q}{2}-\frac{\gamma+p}{p-1}-\frac{\mu(\lambda)}{2}-\alpha}.
\end{align}
Now, observe that 
\begin{align}\label{s2}
	\frac{Q}{2}-\frac{\gamma+p}{p-1}-\frac{\mu(\lambda)}{2}-\alpha
	&=\frac{Q(p-1)-2(\gamma+p)-(p-1)\mu(\lambda)-2\alpha(p-1)}{2(p-1)}\\
	&=\frac{p(Q-2-\mu(\lambda)-2\alpha)-(Q+2\gamma-\mu(\lambda)-2\alpha)}{2(p-1)}\\
	&< 0,
\end{align}
since \begin{align}
	1< p<\frac{Q+2\gamma-\mu(\lambda)-2\alpha}{Q-2-\mu(\lambda)-2\alpha}.
\end{align}
Finally, in light of \eqref{s2}, and since $\alpha\in (-1,0),$ taking the limit $T\to\infty$ in \eqref{s1} provides
\begin{align}
	\int_{\mathbbm{H}^N}\|\xi\|_H^{-\mu(\lambda)}f(\xi)d\xi\leq 0,
\end{align}
which is a contradiction to our hypothesis that $\int_{\mathbbm{H}^N}\|\xi\|_H^{-\mu(\lambda)}f(\xi)d\xi>0.$ This proves our claim.\qed\\

\noindent \textbf{Proof of Theorem \ref{Thm Blowup O2}.}		
Let, if possible, $u$ be a global-in-time solution to \eqref{eq 0.1}. We follow the similar arguments as in Theorem \ref{Thm Blowup O} with minor change in the test functions.
Consider 
\begin{align}
	\phi(\xi,t)=\phi_1(\xi)\phi_2(t).
\end{align} 
Here
\begin{align}
	\phi_1(\xi)=\varphi\left(\epsilon{\|\xi\|_{H}}\right), \, \phi_2(t)=\left(1-\frac{t}{T}\right)^l
\end{align}
for large enough positive constant $T$ and a fixed sufficiently small number $\epsilon>0.$ We recall that $\varphi$ is the same function defined in \eqref{Phip}.

Mimicking the similar lines as in the proof of Theorem \ref{Thm Blowup O}, with the above changes in the definition of Test function, we obtain the following inequality in place of \eqref{lep}:
\begin{align}
	(1-2\varepsilon)&\int_0^T\int_{B^{H}_{2\sqrt T}}\|\xi\|_{H}^{{-(p+1)\mu(\lambda)}}|v(t,\xi)|^pI^\gamma_{T^-}\phi(t,\xi) d\xi dt+\int_{B^{H}_{2\sqrt T}}\|\xi\|_{H}^{-2\mu(\lambda)}v_0\phi(0,\xi)d\xi\\
	\qquad\qquad&+\int_0^T\int_{B^{H}_{2\sqrt T}} \|\xi\|_{H}^{-\mu(\lambda)}t^\alpha f(\xi)\phi(t,\xi) d\xi dt\leq CT^{1-\frac{\gamma+p}{p-1}}+CT^{1-\frac{\gamma}{p-1}}.
\end{align}
Further, following the similar steps as used after \eqref{lep} above, we get
\begin{align}\label{f2l}
	T^{-(\alpha+1)}\int_{B^{H}_{2\sqrt T}}\|\xi\|_{H}^{-2\mu(\lambda)}v_0\phi_1d\xi+\int_{\mathbbm{H}^N}\|\xi\|_{H}^{-\mu(\lambda)}f\phi_1d\xi&\leq CT^{1-\frac{\gamma+p}{p-1}-1-\alpha
	}+CT^{1-\frac{\gamma}{p-1}-1-\alpha}\\
	&\leq C\left(T^{-\frac{\gamma+p}{p-1}-\alpha
	}+T^{-\frac{\gamma}{p-1}-\alpha}\right).
\end{align}
Finally, in view of the fact that $\gamma\geq 0$ and $\alpha>0,$ we get a contradiction on letting $T\To\infty.$ This completes the proof.\qed

\noindent\textbf{Proof of Theorem \ref{Thm Blowup O3}.} Following the similar lines of proof as in Theorem \ref{Thm Blowup O2} till \eqref{f2l}, we have
\begin{align}
	T^{-(\alpha+1)}\int_{B^{H}_{2\sqrt T}}\|\xi\|_{H}^{-2\mu(\lambda)}v_0\phi_1d\xi+\int_{\mathbbm{H}^N}\|\xi\|_{H}^{-\mu(\lambda)}f\phi_1d\xi&\leq CT^{1-\frac{\gamma+p}{p-1}-1-\alpha
	}+CT^{1-\frac{\gamma}{p-1}-1-\alpha}\\
	&\leq C\left(T^{-\frac{\gamma+p}{p-1}-\alpha
	}+T^{-\frac{\gamma}{p-1}-\alpha}\right).
\end{align}
Finally, in view of the fact that $\gamma> 0$ and $\alpha\geq 0,$ we get a contradiction on letting $T\To\infty.$ This completes the proof. 
\qed

\section{Lifespan estimate}\label{Life s}
Building upon the result, Theorem \ref{Thm Blowup O}, we address the question of how rapidly blow-up occurs in the subcritical regime.
We denote the maximal existence time by 
\begin{align}
	T_\varepsilon\coloneqq\sup_{T>0}\{\eqref{eq 0.1} \text{ admits a solution in }\mathbbm{H}^N\times [0,T)\},
\end{align}
and prove the following estimate:
\begin{thm}\label{life}
	Let $\gamma\in [0,1),$ $\alpha\in (-1,0),$ and $p\in \bigg(1,\frac{Q+2\gamma-\mu(\lambda)-2\alpha}{Q-2-\mu(\lambda)-2\alpha}\bigg).$ Assume that $u_0\in L^\infty(\mathbbm{H}^N)$ with $u_0\geq 0,$ $0<\int_{\mathbbm{H}^N}\|\xi\|_H^{-\mu(\lambda)}f(\xi)d\xi<\infty$ and 
	\begin{align}\label{fxi}
		f(\xi)\geq \varepsilon\|\xi\|_{H}^{-2\beta}, \text{ for }\|\xi\|_{H}\geq 1, \beta\in \left(\frac{Q}{2}-\frac{\mu(\lambda)}{2},\frac{\gamma+p}{p-1}+\alpha\right) \text{ and }\varepsilon>0. 
	\end{align} 
	Then for any local-in-time solution $u_\varepsilon$ to \eqref{eq 0.1}, the following estimate on $T_\varepsilon$ is valid: 
	\begin{align}
		T_\varepsilon\leq C\varepsilon^{\big({\beta-\frac{\gamma+p}{p-1}-\alpha}\big)^{-1}}, 
	\end{align}
	for some constant $C>0$ independent of $T_\varepsilon.$	
\end{thm}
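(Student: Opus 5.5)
We plan to reuse the test-function argument from the proof of Theorem \ref{Thm Blowup O}, now keeping track of the dependence on $\varepsilon$. Take $T<T_\varepsilon$ large. Let $v_\varepsilon=\|\cdot\|_H^{\mu(\lambda)}u_\varepsilon$ be a weak solution of \eqref{eq equi} on $\mathbbm{H}^N\times[0,T)$. Use the same test function $\phi=\phi_1\phi_2$, with $\phi_1(\xi)=\varphi(\|\xi\|_H/T^{1/2})$, $\phi_2(t)=(1-t/T)^l$ and $l$ as in \eqref{lle}. All steps up to \eqref{s1} are local in time and require only a solution on $[0,T)$, so they remain valid. Drop the nonnegative term containing $v_0$, which is allowed since $u_0\ge0$. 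This yields
\begin{align}
\int_{B^H_{2\sqrt T}}\|\xi\|_H^{-\mu(\lambda)}f\phi_1\,d\xi\le C\,T^{\frac{Q}{2}-\frac{\gamma+p}{p-1}-\frac{\mu(\lambda)}{2}-\alpha},
\end{align}
where $C$ does not depend on $T$ or $\varepsilon$. The constants $C_1,C_2$ in \eqref{lep} come only from the rescaled test function, so they are independent of $\varepsilon$.

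Next we bound the left-hand side from below using \eqref{fxi}. Since $\phi_1=1$ on $B^H_{\sqrt T}$ and $f\ge0$, it suffices to restrict to the annulus $1\le\|\xi\|_H\le\sqrt T$. Use polar coordinates for the Korányi norm, $d\xi= c\,r^{Q-1}dr\,d\sigma$, which holds by homogeneity of the dilations $\Phi_\lambda$. This gives
\begin{align}
\int_{1\le\|\xi\|_H\le\sqrt T}\varepsilon\|\xi\|_H^{-2\beta-\mu(\lambda)}d\xi=c\,\varepsilon\int_1^{\sqrt T}r^{Q-1-2\beta-\mu(\lambda)}dr .
\end{align}

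The exponent $Q-2\beta-\mu(\lambda)$ is where the choice of $\beta$ matters, and this is the main obstacle. For a lower bound that grows like $\varepsilon T^{\frac{Q-\mu(\lambda)}{2}-\beta}$, we need $Q-2\beta-\mu(\lambda)>0$, that is $\beta<\frac{Q-\mu(\lambda)}{2}$. The hypothesis as stated, however, gives $\beta>\frac{Q}{2}-\frac{\mu(\lambda)}{2}$, which makes the integral bounded. The hypothesis $\int\|\xi\|_H^{-\mu(\lambda)}f<\infty$ also forces $\beta$ above this threshold, so this case has to be handled directly.

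In that regime we would instead keep the full weight and not discard the $T$-dependence carelessly. The sharper route is to redo the integration honestly. Weaken the right-hand side by the bound $T^{-(\alpha+1)}$ on the $f$-term, obtained from \eqref{s3} with the explicit factor $T^{\alpha+1}\int_0^1 t'^\alpha\phi_2(Tt')dt'$. Then compare $\varepsilon\,T^{\alpha+1}\cdot T^{\frac{Q-\mu(\lambda)}{2}-\beta}$, the contribution of the shell $\sqrt T/2\le\|\xi\|_H\le\sqrt T$, where $\|\xi\|_H^{-2\beta}\approx T^{-\beta}$ and the shell has volume $\approx T^{Q/2}$, with the right-hand side $CT^{\frac{Q}{2}+1-\frac{\gamma+p}{p-1}-\frac{\mu(\lambda)}{2}}$. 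Using only the shell near $\sqrt T$ avoids the sign issue of the exponent entirely. This gives
\begin{align}
\varepsilon\,T^{\frac{Q-\mu(\lambda)}{2}-\beta+\alpha+1}\le C\,T^{\frac{Q-\mu(\lambda)}{2}+1-\frac{\gamma+p}{p-1}},\qquad\text{i.e.}\qquad \varepsilon\le C\,T^{\beta-\frac{\gamma+p}{p-1}-\alpha}.
\end{align}

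Since $\beta<\frac{\gamma+p}{p-1}+\alpha$, the exponent is negative. Raising both sides to the negative power $\big(\beta-\frac{\gamma+p}{p-1}-\alpha\big)^{-1}$ reverses the inequality and gives $T\le C\varepsilon^{(\beta-\frac{\gamma+p}{p-1}-\alpha)^{-1}}$ for every $T<T_\varepsilon$. Letting $T\to T_\varepsilon$ yields the claim.

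We expect two delicate points. The first is the shell lower bound: using $\phi_1\ge$ const on the shell, which requires the cutoff to be taken with support up to radius $2\sqrt T$. The second is checking that the constants stay uniform in $\varepsilon$.
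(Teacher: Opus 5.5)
Your argument is correct and is essentially the paper's proof: you reuse the bound $\int_{\mathbb{H}^N}\|\xi\|_H^{-\mu(\lambda)}f\phi_1\,d\xi\le C\,T^{\frac{Q}{2}-\frac{\gamma+p}{p-1}-\alpha-\frac{\mu(\lambda)}{2}}$ from the proof of Theorem~\ref{Thm Blowup O}, and your shell $\sqrt{T}/2\le\|\xi\|_H\le\sqrt{T}$ is the unrescaled version of the region that keeps the paper's rescaled integral over $\mathbb{H}^N\setminus B^H_{1/\sqrt{T}}$ bounded below by a $T$-independent constant, giving $\varepsilon\le C\,T^{\beta-\frac{\gamma+p}{p-1}-\alpha}$ exactly as in the paper. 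The ``obstacle'' you describe is not actually one: when $2\beta+\mu(\lambda)>Q$, the annulus $1\le\|\xi\|_H\le\sqrt{T}$ gives the even larger lower bound $c\,\varepsilon$ for $T\ge 4$. Your use of $f\ge 0$ on $\{\|\xi\|_H<1\}$ and of $T_\varepsilon$ being large is the same implicit assumption the paper makes.
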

\begin{proof}
	In proving this result, we make use of the test functions introduced in the proof of Theorem \ref{Thm Blowup O}. Let $u_\varepsilon$ be local-in-time solution to \eqref{eq 0.1} with its maximal time $T_\varepsilon$ and also define $v_\varepsilon(\xi,t)\coloneqq\|\xi\|_{H}^{{\mu(\lambda)}}u_\varepsilon(\xi,t).$
	Now, for a fixed 
	\begin{align}
		l=1+\frac{p+\gamma}{p-1},
	\end{align}
	we define a function \begin{align}
		\phi(\xi,t)=\phi_1(\xi)\phi_2(t).
	\end{align} 
	Here
	\begin{align}
		\phi_1(\xi)=\varphi\left(\frac{\|\xi\|_{H}}{T_{\varepsilon}^{\frac{1}{2}}}\right), \, \phi_2(t)=\left(1-\frac{t}{T_\varepsilon}\right)^l,
	\end{align}
	and $\varphi$ is the same function appearing in \eqref{Phip}.
	Now, proceeding along similar lines as in the proof of Theorem \ref{Thm Blowup O}, and incorporating the following change of variables 
	\begin{align}\label{ch var}
		\xi'=\Phi_{T_\varepsilon^{-\frac{1}{2}}}\xi \text{ and } t'=T_\varepsilon^{-1}t,
	\end{align}
	provides
	\begin{align}\label{lepo}
		\\(1&-2\varepsilon)\int_0^{T_\varepsilon}\int_{B^{H}_{2\sqrt T_\varepsilon}}\|\xi\|_{H}^{{-(p+1)\mu(\lambda)}}|v_\varepsilon|^pI^\gamma_{T_\varepsilon^-}\phi d\xi dt+\int_{B^{H}_{2\sqrt T_\varepsilon}}\|\xi\|_{H}^{-2\mu(\lambda)}v_0\phi(0,\xi)d\xi+\int_0^{T_\varepsilon}\int_{B^{H}_{2\sqrt T_\varepsilon}} \|\xi\|_{H}^{-\mu(\lambda)}t^\alpha f\phi d\xi dt\\&\leq C(\varepsilon){T_\varepsilon}^{\frac{Q}{2}+1-\frac{\gamma+p}{p-1}-\frac{\mu(\lambda)}{2}}\int_{0}^1\int_{B^{H}_{2}}\|\xi'\|_{H}^{-{\mu(\lambda)}}\big(\phi_1\big(\Phi_{T_\varepsilon^{\frac{1}{2}}}\xi'\big)\big)^{\frac{-1}{p-1}}(I^\gamma_{T_\varepsilon T^-}\phi_2(T_\varepsilon t'))^{-\frac{1}{p-1}}\phi_1(\Phi_{T_\varepsilon^{\frac{1}{2}}}\xi')^{\frac{p}{p-1}}|\partial_t\phi_2(T_\varepsilon t')|^{\frac{p}{p-1}}d\xi'dt'\\
		&\quad+C_1C(\varepsilon)T_\varepsilon^{\frac{Q}{2}+1-\frac{\gamma+p}{p-1}-\frac{\mu(\lambda)}{2}}\times \\
		&\qquad \int_0^1\int_{B^{H}_{2}}\|\xi'\|_{H}^{\mu(\lambda)\left(\frac{p+1}{p-1}\right)}\big(\phi_1\big(\Phi_{T_\varepsilon^{\frac{1}{2}}}\xi'\big)\big)^{{-\frac{1}{p-1}}}(I^\gamma_{T_\varepsilon T^-}\phi_2(T_\varepsilon t'))^{-\frac{1}{p-1}}(\phi_2(T_\varepsilon t'))^{\frac{p}{p-1}}(L_H\phi_1(\Phi_{T_\varepsilon^{\frac{1}{2}}}\xi'))^{\frac{p}{p-1}}d\xi'dt',
	\end{align}
	for some constant $C_1$ independent of $T_\varepsilon,$ since $\phi_1\in C_c^\infty(\mathbbm{H}^N).$ Using the facts that $u_0\geq 0$ and $\epsilon$ is arbitrarily small together with the following relation
	\begin{align}\label{s3}
		\int_0^{T_\varepsilon}\int_{\mathbbm{H}^N}\|\xi\|_{H}^{-\mu(\lambda)}t^\alpha f\phi d\xi dt&=T_\varepsilon^{(\alpha+1)}\int_{\mathbbm{H}^N}\|\xi\|_{H}^{-\mu(\lambda)}f(\xi)\phi_1(\xi)d\xi\int_0^1t'^\alpha \phi_2(T_\varepsilon t')dt'\\
		&\geq   CT_\varepsilon^{(\alpha+1)}\int_{\mathbbm{H}^N}\|\xi\|_{H}^{-\mu(\lambda)}f(\xi)\phi_1(\xi) d\xi,
	\end{align}
	in the expression \eqref{lepo} yields
	\begin{align}
		CT_\varepsilon^{(\alpha+1)}\int_{\mathbbm{H}^N}\|\xi\|_H^{-\mu(\lambda)}f(\xi)\phi_1(\xi)d\xi
		&\leq CT_\varepsilon^{\frac{Q}{2}+1-\frac{\gamma+p}{p-1}-\frac{\mu(\lambda)}{2}},
	\end{align}
	for some positive constant $C$ independent of $T_\varepsilon.$ This simplifies to
	\begin{align}\label{s10}
		\int_{\mathbbm{H}^N}\|\xi\|_{H}^{-\mu(\lambda)}f(\xi)\phi_1(\xi)d\xi
		\leq CT_\varepsilon^{\frac{Q}{2}-\frac{\gamma+p}{p-1}-\alpha-\frac{\mu(\lambda)}{2}}.
	\end{align}
	Further, using the estimate \eqref{fxi} together with \eqref{ch var}, we obtain
	\begin{align}\label{mt0}
		\int_{\mathbbm{H}^N}\|\xi\|_{H}^{-\mu(\lambda)}f(\xi)\phi_1(\xi)d\xi&=T_\varepsilon^{\frac{Q}{2}}\int_{\mathbbm{H}^N}\|\Phi_{T_\varepsilon^{\frac{1}{2}}}\xi\|_{H}^{-\mu(\lambda)}f\bigg(\Phi_{T_\varepsilon^{\frac{1}{2}}}\xi'\bigg)\phi_1\bigg(\Phi_{T_\varepsilon^{\frac{1}{2}}}\xi'\bigg)d\xi'\\
		&\geq \varepsilon T_\varepsilon^{\frac{Q}{2}-\beta-\frac{\mu(\lambda)}{2}}\int_{\mathbbm{H}^N\setminus B^{H}_{\frac{1}{\sqrt{T}_\varepsilon}}}\|\xi'\|_{H}^{-\mu(\lambda)}\|\xi'\|_{H}^{-2\beta}\phi_1\bigg(\Phi_{T_\varepsilon^{\frac{1}{2}}}\xi'\bigg)d\xi'\\
		&\geq \varepsilon T_\varepsilon^{\frac{Q}{2}-\beta-\frac{\mu(\lambda)}{2} }\int_{\mathbbm{H}^N\setminus B^{H}_{\frac{1}{\sqrt{T}}}}\|\xi'\|_{H}^{-2\beta-\mu(\lambda)}\phi_1\bigg(\Phi_{T_\varepsilon^{\frac{1}{2}}}\xi'\bigg)d\xi',
	\end{align}
	for all $T\leq T_\varepsilon.$ Here, $\Phi$ denotes the dilation given by \eqref{dil}.
	Finally, utilizing \eqref{mt0} in \eqref{s10} yields
	\begin{align}
		\varepsilon T_\varepsilon^{\frac{Q}{2}-\beta-\frac{\mu(\lambda)}{2}}\int_{\mathbbm{H}^N\setminus B^{H}_{\frac{1}{\sqrt{T}}}}\|\xi'\|_{H}^{-2\beta-\mu(\lambda)}\phi_1\bigg(\Phi_{T_\varepsilon^{\frac{1}{2}}}\xi'\bigg)d\xi'\leq CT_\varepsilon^{\frac{Q}{2}-\frac{\gamma+p}{p-1}-\alpha-\frac{\mu(\lambda)}{2}}.
	\end{align}
Furthermore, in view of the fact that $2\beta>Q-\mu$ (see \eqref{fxi}), we get
	\begin{align}
		\varepsilon&\leq CT_\varepsilon^{\frac{Q}{2}-\frac{\gamma+p}{p-1}-\alpha-\frac{Q}{2}+\beta}\\
		&=CT_\varepsilon^{-\frac{\gamma+p}{p-1}-\alpha+\beta}.
	\end{align}
	This deduces
	\begin{align}
		T_\varepsilon\leq C\varepsilon^{\big({\beta-\frac{\gamma+p}{p-1}-\alpha}\big)^{-1}},
	\end{align}
	for some positive constant $C$ independent of $T_\varepsilon.$ This completes the proof.
\end{proof}
\section{Discussion}
We have characterized the finite-time blow-up and local-in-time existence of solutions to semilinear evolution equations with Hardy-type potentials and memory terms on the Heisenberg group. In particular, we identified critical exponents for both instantaneous and finite-time blow-up, highlighting the interplay between the singular potential, temporal nonlocality, and the external source term.

As mentioned earlier, the present model is novel because it simultaneously incorporates
a singular Hardy potential, nonlocal-in-time memory term, and a time-dependent source. The results are new even in the Euclidean setting, including the homogeneous case $f=0$ in equation \eqref{eq 0.1}. 

A major limitation of the current work is that the question of global-in-time existence for general initial data,  when both the Hardy potential and the memory term are present, remains open. In addition, studying a nonlocal-in-space counterpart of the present model, obtained by replacing $-\Delta_H$ with the fractional sub-Laplacian $(-\Delta_H)^s,$ is completely open. One of the main challenges in such an extension is that the sharp constant in the fractional Hardy inequality on the Heisenberg group is still unknown. The analytical tools available at present do not seem sufficient to settle these questions. Nevertheless, in the absence of the Hardy potential and memory term, the heat equations involving fractional sub-Laplacian have been very recently investigated in \cite{Bui, Oza, Oza Suragan 1}. We expect that further advances in sub-Riemannian energy methods or refined a priori estimates may offer new insights into these challenging questions.
\stoptoc
\section{Declarations} 
\noindent

\subsection*{Funding } This research was funded by Nazarbayev University under Collaborative Research Program Grant 20122022CRP1601.

\subsection*{Availability of data and materials}     
Not Applicable.

\subsection*{Conflicts of interests/Competing interests}
There are no conflict of interests of any type.
\resumetoc

\end{document}